\newtheorem{theorem}{Theorem}
\newtheorem{prop}[theorem]{Proposition}
\newtheorem{lemma}[theorem]{Lemma}
\newtheorem{cor}[theorem]{Corollary}
\theoremstyle{definition}
\newtheorem{definition}[theorem]{Definition}
\newtheorem{example}[theorem]{Example}
\newtheorem{problem}[theorem]{Problem}
\newtheorem{remark}[theorem]{Remark}
\numberwithin{theorem}{section}
\newcommand{\Z}{\mathcal{Z}}
\newcommand{\R}{\mathbb{R}}
\newcommand{\bv}{\mathbf{v}}
\newcommand{\be}{\mathbf{e}}
\renewcommand{\O}{\mathcal{O}}
\renewcommand{\vert}{\text{vert}}
\newcommand{\inv}{\text{inv}}
\newcommand{\swap}{\text{swap}}
\newcommand{\Swap}{\text{Swap}}
\newcommand{\excl}{\text{excl}}
\newcommand{\bs}{\mathbf{s}}
\newcommand{\bw}{\mathbf{w}}
\renewcommand{\L}{\mathcal{L}}
\newcommand{\Zbb}{\mathbb{Z}}
\title[$h^*$-polynomial of the order polytope of the zig-zag poset]{The $h^*$-polynomial of the order polytope of the zig-zag poset}
\author{Jane Ivy Coons and Seth Sullivant}
\date\today
\begin{document}
\maketitle

\begin{abstract}
We describe a family of shellings for the canonical triangulation of the 
order polytope of the zig-zag poset. This gives a new combinatorial interpretation 
for the coefficients in the numerator of the Ehrhart series of this order polytope
in terms of the swap statistic on alternating permutations.
\end{abstract}

\section{Introduction and Preliminaries}

The \emph{zig-zag poset} $\Z_n$ on ground set $\{z_1, \dots, z_n\}$ is the poset with 
exactly the cover relations $z_1 < z_2 > z_3 < z_4 > \dots$. That is, this partial 
order satisfies $z_{2i-1} < z_{2i}$ and $z_{2i} > z_{2i+1}$ for all $i$ between $1$ 
and $\lfloor \frac{n-1}{2} \rfloor$. The order polytope of $\Z_n$, denoted $\O(\Z_n)$ 
is the set of all $n$-tuples $(x_1, \dots, x_n) \in \R^n$ that satisfy 
$0 \leq x_i \leq 1$ for all $i$ and $x_i \leq x_j$ whenever 
$z_i < z_j$ in $\Z_n$. In this paper, we introduce the ``swap" permutation statistic on alternating permutations
 to give a new combinatorial interpretation of the numerator of the Ehrhart series of $\O(\Z_n)$.

We began studying this problem in relation to combinatorial properties
of the Cavender-Farris-Neyman model with a molecular clock (or CFN-MC model) from mathematical phylogenetics \cite{coons2018}.
We were interested in the polytope associated to the toric variety
obtained by applying the discrete Fourier transform to the Cavender-Farris-Neyman model
with a molecular clock on a given rooted binary phylogenetic tree.
We call this polytope the CFN-MC polytope.
In particular, the Ehrhart polynomial of 
$\O(\Z_n)$ is equal to that of the CFN-MC polytope of any rooted binary tree on 
$n+1$ leaves .
Therefore, the Ehrhart series of $\O(\Z_n)$ is also equal to the Hilbert series 
of the toric ideal of phylogenetic invariants of the CFN-MC model on such a tree.

In the remainder of this section, we will give some preliminary definitions and key theorems
regarding alternating permutations, order polytopes and Ehrhart theory. 
In Section 2, we prove our main result, Theorem \ref{thm:ehrhartseries}, 
by giving a shelling of the canonical triangulation of the order polytope of the zig-zag poset.
In Section 3, we give an alternate proof of Theorem \ref{thm:ehrhartseries} by counting chains 
in the lattice of order ideals of the zig-zag poset. 
This proof makes use of the theory of Jordan-H\"older sets of general 
posets developed in Chapter 2 of \cite{ec1}.
In Section 4, we discuss some combinatorial properties of the swap statistic and present some open problems.

\subsection{Alternating Permutations}

\begin{definition}
An \emph{alternating permutation} on $n$ letters is a permutation 
$\sigma$ such that $\sigma(1) < \sigma(2) > \sigma(3) < \sigma(4) > \dots$. 
That is, an alternating permutation satisfies
$\sigma(2i-1) < \sigma(2i)$ and $\sigma(2i) > \sigma(2i+1)$ for $1 \leq i \leq \lfloor \frac{n}{2} \rfloor$.  
\end{definition}

We denote by $A_n$ the set of all alternating permutations.
Notice that alternating
permutations  coincide with order-preserving bijections from $[n]$ to $\Z_n$. 

The number of alternating permutations of length $n$ is the $n$th 
\emph{Euler zig-zag number} $E_n$. The sequence of Euler zig-zag numbers starting 
with $E_0$ begins $1, 1, 1, 2, 5, 16, 61, 272, \dots$. This sequence can be found in the 
Online Encyclopedia of Integer Sequences with identification number A000111 \cite{oeis}.
 The exponential generating function for the Euler zig-zag numbers satisfies
\[
\sum_{n \geq 0} E_n \frac{x^n}{n!} = \tan x + \sec x.
\]
Furthermore, the Euler zig-zag numbers satisfy the recurrence
\[
2 E_{n+1} = \sum_{k=0}^n \binom{n}{k} E_k E_{n-k}
\]
for $n \geq 1$ with initial values $E_0 = E_1 = 1$. A thorough background 
on the combinatorics of alternating permutations can be found in 
\cite{stanley2010}. The following new permutation statistic on 
alternating permutations is central to our results.

\begin{definition}
Let $\sigma$ be an alternating permutation. The permutation statistic 
$\swap(\sigma)$ is the number of $i < n$ such that $\sigma^{-1}(i) < \sigma^{-1}(i+1) - 1$. 
Equivalently, this is the number of $i < n$ such that $i$ is to the left of 
$i+1$ and swapping $i$ and $i+1$ in $\sigma$ yields another alternating permutation. 
The swap-set $\Swap(\sigma)$ is the set of all $i < n$ for which we can perform this 
operation. We say that $\sigma$ \emph{swaps to} $\tau$ if $\tau$ can be obtained 
from $\sigma$ by performing this operation a single time. 
\end{definition}

We will also make use of the following two features which can be defined for any permutation.
Let $\sigma \in S_n$.

\begin{definition}
A \emph{descent} of $\sigma$ is an index $i \in [n-1]$ such that $\sigma(i) > \sigma(i+1)$.
An \emph{inversion} of $\sigma$ is any pair $(i,j)$ for $1 \leq i < j \leq n$ such that $\sigma^{-1}(j) < \sigma^{-1}(i)$.
\end{definition}

When we write $\sigma$ in one-line notation, a descent is a position on $\sigma$ where the value of $\sigma$ drops.
An inversion is any pair of values in $[n]$ where the larger number appears before the smaller number in $\sigma$.
\subsection{Order Polytopes}

To every finite poset on $n$ elements one can associate a polytope in 
$\R^n$ by viewing the cover relations on the poset as inequalities on Euclidean space.

\begin{definition}
The \textit{order polytope} $\mathcal{O}(P)$ of any poset $P$ on ground set $p_1, \dots, p_n$ is the set of all $\bv \in \R^n$ that satisfy $0 \leq v_i \leq 1$ for all $i$ and $v_i \leq v_j$ if $p_i < p_j$ is a cover relation in $P$.
\end{definition}

Order polytopes for arbitrary posets have been the object of considerable study, 
and are discussed in detail in \cite{stanley1986}.  In the case of $\mathcal{O}(\Z_n)$, 
the facet defining inequalities are those of the form
\begin{align*} \begin{split}
-v_i  \leq 0 & \text{ for $i \leq n$ odd} \\
v_i  \leq 1 & \text{ for $i \leq n$ even}\\
v_i - v_{i+1}  \leq 0 & \text{ for $i \leq n-1$ odd, and} \\
-v_i + v_{i+1}  \leq 0 & \text{ for $ i \leq n-1$ even.} \end{split} \label{eqn:orderpolytopeineqs}
\end{align*}
Note that the inequalities of the form $-v_i \leq 0$ for $i$ even and 
$v_i \leq 1$ for $i$ odd are redundant. The order polytope of $\Z_n$ is 
also the convex hull of all $(v_1, \dots, v_n) \in \{0,1\}^n$ that correspond 
to labelings of $\Z_n$ that are weakly consistent with the partial order on $\{p_1, \dots, p_n\}$.

In \cite{stanley1986}, Stanley gives the following canonical 
unimodular triangulation of the order polytope of any poset $P$ on ground 
set $\{p_1, \dots, p_n\}$. Let $\sigma: P \rightarrow [n]$ be a linear extension of $P$. 
Denote by $\be_i$ the $i$th standard basis vector in $\R^n$. The simplex 
$\Delta^{\sigma}$ is the convex hull of $\bv_0^{\sigma}, \dots, \bv_n^{\sigma}$ where 
$\bv_0^{\sigma}$ is the all $1$'s vector and $\bv^\sigma_i = \bv^\sigma_{i-1} - \be_{\sigma^{-1}(i)}$. 
Letting $\sigma$ range over all linear extensions of $P$ yields a unimodular triangulation 
of $\O(P)$. Hence, the normalized volume of $\O(P)$ is the number of linear 
extensions of $P$. In particular, this means that the volume of $\O(\Z_n)$ 
is the Euler zig-zag number, $E_n$.

\begin{example}\label{ex:intro}
Consider the case when $n = 4$. The zig-zag poset $\Z_4$ is pictured in Figure 
\ref{fig:zigzagexample}.  The order polytope $\O(\Z_4)$ has facet defining inequalities

\begin{minipage}{.5\textwidth}
\begin{align*}
-v_1 &\leq 0 \\
-v_3 & \leq 0 \\
v_1 - v_2 & \leq 0 \\
v_3 - v_4 & \leq 0.
\end{align*}
\end{minipage}\begin{minipage}{.5\textwidth}
\begin{align*}
v_2 & \leq 1 \\
v_4 & \leq 1 \\
-v_2 + v_3 & \leq 0 \\
\end{align*}
\end{minipage}
The vertices of $\O(\Z_4)$ are the columns of the matrix
\[
\begin{bmatrix}
0 & 0 & 0 & 1 & 0 & 1 & 0 & 1 \\
0 & 1 & 0 & 1 & 1 & 1 & 1 & 1 \\
0 & 0 & 0 & 0 & 0 & 0 & 1 & 1\\
0 & 0 & 1 & 0 & 1 & 1 & 1 & 1 
\end{bmatrix}.
\]
The alternating permutations on 4 elements, which correspond to 
linear extensions of $\Z_4$ are $1324$, $1423$, $2314$, $2413$, and $3412$. 
Note that there are $E_4 = 5$ such alternating permutations, 
so the normalized volume of $\O(\Z_4)$ is 5. The simplex in the canonical 
triangulation of $\O(\Z_n)$ corresponding to $1423$ is
\[
\Delta^{1324} = \text{conv} \begin{bmatrix}
1 & 0 & 0 & 0 & 0 \\
1 & 1 & 1 & 1 & 0 \\
1 & 1 & 0 & 0 & 0 \\
1 & 1 & 1 & 0 & 0
\end{bmatrix}.
\]

\begin{figure}

\begin{center}
\begin{tikzpicture}
\draw(0,0) -- (1,1) -- (2,0) -- (3,1);
\draw[fill]  (0,0) circle [radius = .05];
\node[below] at (0,0) {$z_1$};
\draw[fill]  (1,1) circle [radius = .05];
\node[above] at (1,1) {$z_2$};
\draw[fill]  (2,0) circle [radius = .05];
\node[below] at (2,0) {$z_3$};
\draw[fill]  (3,1) circle [radius = .05];
\node[above] at (3,1) {$z_4$};
\end{tikzpicture}
\end{center}
\caption{The zig-zag poset $\Z_4$}
\label{fig:zigzagexample}
\end{figure}
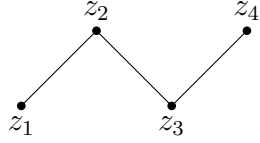
\end{example}

\subsection{Ehrhart Theory} \label{subsec:ehrhart}

We turn our attention to the study of Ehrhart 
functions and series of lattice polytopes. Let $P \subset \R^n$ be any 
polytope with integer vertices. Recall that the \emph{Ehrhart function}, 
$i_P(m)$, counts the integer points in dilates of $P$; that is,
\[
i_P(m) = \# (\Zbb^n \cap mP),
\]
where $mP = \{m \bv \mid \bv \in P\}$ denotes the $m$th dilate of $P$. 
The Ehrhart function is, in fact, a polynomial in $m$ \cite[Chapter~3]{beck2007}. 
We further define the \emph{Ehrhart series} of $P$ to be the generating function
\[
\mathrm{Ehr}_P(t) = \sum_{m \geq 0} i_P(m) t^m.
\] 
The Ehrhart series is of the form
\[
\mathrm{Ehr}_P(t) = \frac{h^*_P(t)}{(1-t)^{d+1}},
\]
where $d$ is the dimension of $P$ and 
$h^*_P(t)$ is a polynomial in $t$ of degree at most $n$. Often
we just write $h^*(t)$ when the particular polytope is clear.  The coefficients of 
$h^*(t)$ have an interpretation in terms of a \emph{shelling} of a 
unimodular triangulation of $P$, if such a shellable unimodular triangulation exists.

\begin{definition}
Let $\mathcal{T}$ be the collection of maximal dimensional 
simplices in a pure simplicial complex of dimension $d$ with $\#\mathcal{T} = s$. 
An ordering $\Delta_1, \Delta_2, \dots, \Delta_s$ on the simplices in 
$\mathcal{T}$ is a \emph{shelling order} if for all $1 < r \leq s$,
 \[
 \bigcup_{i=1}^{r-1} \Big( \Delta_i \cap \Delta_r \Big)
 \]
 is a union of facets of $\Delta_r$.
 \end{definition}

Equivalently, the order $\Delta_1, \Delta_2, \dots, \Delta_s$ 
is a shelling order if and only if for all $r \leq s$ and $k < r$, there exists an $
i < r$ such that $\Delta_k \cap \Delta_r \subset \Delta_i \cap \Delta_r$ and 
$\Delta_i \cap \Delta_r$ is a facet of $\Delta_r$. This means that when we build our 
simplicial complex by adding facets in the order prescribed by the shelling order, 
we add each simplex along its highest dimensional facets. 
Keeping track of the number of facets that each simplex is added along gives 
the following relationship between shellings of a triangulation of an 
integer polytope and the Ehrhart series of the polytope, which is proved in \cite[Chapter~3]{beck2007}. 
 
 \begin{theorem}\label{thm:hstarshelling}
Let $P$ be a polytope with integer vertices. Let $\{\Delta_1, \dots, \Delta_s\}$ be a unimodular triangulation of $P$ using no new vertices. Denote by $h^*_j$ the coefficient of $t^j$ in the $h^*$ polynomial of $P$. If $\Delta_1, \dots, \Delta_s$ is a shelling order, then $h^*_j$ is the number of $\Delta_i$ that are added along $j$ of their facets in this shelling. Equivalently,
\[
 h^*(t) = \sum_{i=1}^s t^{a_i},
 \]
 where $a_i = \#\{k < i \mid \Delta_k \cap \Delta_i \text{ is a facet of } \Delta_i \}$.
 \end{theorem}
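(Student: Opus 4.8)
The plan is to turn the shelling into an explicit \emph{disjoint} decomposition of $P$ into half-open lattice simplices and to compute the Ehrhart series of each piece. I would establish two facts: (i) the shelling induces a partition $P=\bigsqcup_{i=1}^s\Delta_i^\circ$, where $\Delta_i^\circ$ is $\Delta_i$ with exactly $a_i$ of its facets deleted; and (ii) a unimodular lattice $d$-simplex with $a$ of its facets deleted has Ehrhart series $t^a/(1-t)^{d+1}$. Given these, additivity of lattice-point counting over the partition yields
\[
\mathrm{Ehr}_P(t)\;=\;\sum_{i=1}^s\frac{t^{a_i}}{(1-t)^{d+1}}\;=\;\frac{\sum_{i=1}^s t^{a_i}}{(1-t)^{d+1}},
\]
and comparison with $\mathrm{Ehr}_P(t)=h^*(t)/(1-t)^{d+1}$ forces $h^*(t)=\sum_{i=1}^s t^{a_i}$; extracting the coefficient of $t^j$ gives $h^*_j=\#\{i:a_i=j\}$, which is the number of simplices added along $j$ facets (and $a_1=0$, matching $h^*_0=1$).

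For (i): for $i\geq 2$ the shelling hypothesis says $\bigcup_{k<i}(\Delta_k\cap\Delta_i)$ is a union of facets of $\Delta_i$; let $\mathcal{F}_i$ be this set of facets and put $\Delta_i^\circ:=\Delta_i\setminus\bigcup_{F\in\mathcal{F}_i}F$, with $\Delta_1^\circ:=\Delta_1$. Using that any two simplices of the triangulation meet in a common face and that each codimension-one face lies in at most two maximal simplices (the triangulation realizes a polytope), one checks $|\mathcal{F}_i|=a_i$. For the partition: given $x\in P$, let $i$ be least with $x\in\Delta_i$; if $i\geq 2$ and $x$ lay on some $F\in\mathcal{F}_i$ then $F\subseteq\bigcup_{k<i}\Delta_k$ would place $x$ in an earlier simplex, so $x\in\Delta_i^\circ$; and if $x\in\Delta_i^\circ\cap\Delta_j^\circ$ with $i<j$ then $x\in\Delta_i\cap\Delta_j\subseteq\bigcup_{F\in\mathcal{F}_j}F$, contradicting $x\in\Delta_j^\circ$. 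This is the only place where shellability — rather than merely having a triangulation — is used.

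For (ii) I would pass to the cone over $P$. Taking $\dim P=n$ for notational simplicity, lift $\Delta:=\Delta_i=\mathrm{conv}(v_0,\dots,v_n)$ to $w_j:=(v_j,1)\in\R^{n+1}$ and set $C:=\mathrm{cone}(w_0,\dots,w_n)$, so the lattice points of $C$ at last coordinate $m$ are exactly those of $m\Delta$, and likewise with any facets deleted; thus the partition of $P$ lifts to $\mathrm{cone}(P)=\bigsqcup_i C_i^\circ$, where $C_i^\circ$ is the cone over $\Delta_i$ with the subcones over its deleted facets removed, so that the apex lies in $C_1^\circ=C_1$ alone. Since $\Delta$ is unimodular (and the triangulation uses no new vertices), $w_0,\dots,w_n$ is a $\Zbb$-basis of $\Zbb^{n+1}$, so every lattice point of $C$ is uniquely $\sum_j n_j w_j$ with $n_j\in\Zbb_{\geq 0}$, and deleting the facet of $\Delta$ opposite $v_j$ is precisely the condition $n_j\geq 1$. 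As every $w_j$ has last coordinate $1$, summing $t^{(\text{last coordinate})}$ over the lattice points of the half-open cone missing the $a$ facets indexed by $S$ gives
\[
\prod_{j\in S}\frac{t}{1-t}\cdot\prod_{j\notin S}\frac{1}{1-t}\;=\;\frac{t^{a}}{(1-t)^{n+1}},
\]
which is the claimed Ehrhart series; when $\dim P=d<n$ one runs the same computation inside the $d$-dimensional affine lattice spanned by $P$, with $n+1$ replaced by $d+1$. Summing these over $i$ (the term $i=1$ contributing $1/(1-t)^{d+1}$, which accounts for the unique lattice point of $\mathrm{cone}(P)$ at height $0$) produces the displayed formula for $\mathrm{Ehr}_P(t)$, hence the theorem.

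The main obstacle is step (ii): identifying the lattice points of the lifted cone with nonnegative integer combinations of the height-one generators — exactly where unimodularity and the ``no new vertices'' hypothesis enter — and verifying that ``delete the facet opposite $v_j$'' translates without loss into ``impose $n_j\geq 1$,'' together with the mildly delicate bookkeeping at height $0$ so that, as generating functions, the half-open pieces genuinely sum to $\mathrm{Ehr}_P(t)$. Step (i) is conceptually the crux of why shellability is the correct hypothesis but combinatorially routine once the pieces are set up.
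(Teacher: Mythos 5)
Your argument is correct: this is the standard ``half-open decomposition plus cone'' proof of this theorem. The paper gives no proof of its own, only a citation to Beck and Robins (Chapter 3), and your write-up is essentially the argument found there — the shelling-induced partition of $P$ into half-open unimodular simplices, followed by the generating-function computation for a half-open unimodular cone using the fact that the lifted vertices form a lattice basis (which is exactly where unimodularity and the ``no new vertices'' hypothesis enter).
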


\begin{example}
Consider the order polytope $\O(\Z_4)$ with its canonical triangulation  
by alternating permutations
\[
\Delta^{3412}, \Delta^{2413},  \Delta^{2314}, \Delta^{1423}, \Delta^{1324}.
\]
This particular ordering of the facets in the canonical triangulation is a
special case of the shelling order that will be established and proved in the
next section.
The fact that this is a shelling order can be checked directly in this
example, for instance:
\[
\Delta^{2314}  \cap (\Delta^{3412} \cup \Delta^{2413})   =  \mathbf{conv}
\begin{bmatrix}
1  & 1 &  0 & 0  \\
1  & 1 &  1 & 0  \\
1  & 0 &  0 & 0  \\
1  & 1 &  1 & 0  
\end{bmatrix}
\]
which is a facet of $\Delta^{2314}$.  Since the intersection consists of a single facet, it will
contribute a $1$ to the coefficient of $t$ in $h^*_{\O(\Z_4))}(t)  =  1 + 3t + t^2$.
\end{example}

The goal of this paper is to prove the following theorem relating 
the $h^*$-polynomial of $\O(\Z_n)$ and the swap statistic.

\begin{theorem}\label{thm:ehrhartseries}
The numerator of the Ehrhart series of $\O(\Z_n)$ is
\[
h^*_{\O(\Z_n)}(t) = \sum_{\sigma \in A_n} t^{\text{swap}(\sigma)}.
\]
\end{theorem}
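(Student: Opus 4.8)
The plan is to invoke Theorem~\ref{thm:hstarshelling} and exhibit an explicit shelling order on the canonical triangulation $\{\Delta^\sigma : \sigma \in A_n\}$ of $\O(\Z_n)$ such that the number of earlier simplices meeting $\Delta^\sigma$ in a facet is exactly $\swap(\sigma)$. The first step is to understand the combinatorics of the facets of a single simplex $\Delta^\sigma$ and of the intersections $\Delta^\sigma \cap \Delta^\tau$ in terms of the linear extensions (alternating permutations) $\sigma$ and $\tau$. A facet of $\Delta^\sigma$ is obtained by deleting one of the vertices $\bv_0^\sigma, \dots, \bv_n^\sigma$; geometrically, deleting $\bv_k^\sigma$ for $0 < k < n$ corresponds to relaxing the inequality $v_{\sigma^{-1}(k)} \le v_{\sigma^{-1}(k+1)}$ (or its reverse) that distinguishes $\sigma$ from the permutation $\sigma'$ obtained by swapping the values $k$ and $k+1$ in one-line notation --- but only when that $\sigma'$ is again a linear extension of $\Z_n$, i.e.\ again alternating. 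This is precisely the swap operation from the definition. So I expect to prove: the facet of $\Delta^\sigma$ obtained by deleting $\bv_k^\sigma$ is shared with another maximal simplex of the triangulation if and only if $k \in \Swap(\sigma)$ or the reverse move $k \in \Swap(\sigma')$ is available from the neighbor, and in that case the neighbor is exactly $\Delta^{\sigma'}$ where $\sigma'$ swaps $k$ and $k+1$. The facets from deleting $\bv_0^\sigma$ or $\bv_n^\sigma$ lie on the boundary of $\O(\Z_n)$ and are never shared.

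Next I would choose the shelling order. A natural candidate is a linear extension of the partial order on $A_n$ generated by ``$\sigma \prec \tau$ if $\tau$ swaps to $\sigma$'' --- equivalently, order the alternating permutations by some statistic compatible with swaps, e.g.\ by decreasing number of inversions, or more precisely by the lexicographic order on the inversion data, so that performing a swap that moves $i$ before $i{+}1$ to $i{+}1$ before $i$ (which removes exactly one inversion, the pair $(i,i{+}1)$) always goes from later to earlier in the order. With such an order in hand, I need to show two things for each $\sigma$ that is not first: (i) for every $k \in \Swap(\sigma)$, the simplex $\Delta^{\sigma'}$ obtained by the corresponding swap comes \emph{earlier} and meets $\Delta^\sigma$ in the facet $F_k$ = (conv of the $\bv_j^\sigma$, $j \ne k$); and (ii) conversely, every earlier simplex $\Delta^\tau$ meeting $\Delta^\sigma$ in a facet does so in one of these $F_k$ with $k \in \Swap(\sigma)$, and distinct $k$ give distinct facets. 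Statement (i) gives $a_\sigma \ge \swap(\sigma)$ and the first half of the shelling condition; statement (ii) gives $a_\sigma \le \swap(\sigma)$ and, combined with the characterization of shared facets above, shows $\bigcup_{\tau \text{ earlier}} (\Delta^\tau \cap \Delta^\sigma)$ is a union of facets of $\Delta^\sigma$. Then $h^*_{\O(\Z_n)}(t) = \sum_\sigma t^{a_\sigma} = \sum_{\sigma \in A_n} t^{\swap(\sigma)}$ by Theorem~\ref{thm:hstarshelling}.

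The technical heart, and the step I expect to be the main obstacle, is the precise bookkeeping of intersections $\Delta^\sigma \cap \Delta^\tau$ for arbitrary pairs of alternating permutations --- not just those related by a single swap --- to verify condition (ii), i.e.\ that no earlier simplex sneaks in along a face of $\Delta^\sigma$ that is a facet but is \emph{not} one of the swap-facets $F_k$. For a general unimodular triangulation of an order polytope coming from linear extensions, the intersection $\Delta^\sigma \cap \Delta^\tau$ is the convex hull of the common vertices together with a controlled set of extra lattice points, and cutting this down to a facet forces $\sigma$ and $\tau$ to agree except on a single adjacent transposition of values. I would make this rigorous by giving a clean description of the vertex set $\{\bv_0^\sigma, \dots, \bv_n^\sigma\}$ as $0/1$ vectors indexed by the order ideals in a maximal chain of the ideal lattice $J(\Z_n)$, so that $\Delta^\sigma \cap \Delta^\tau$ corresponds to the intersection of two maximal chains in $J(\Z_n)$; a facet-sized intersection then means the two chains differ in exactly one ``rank'', i.e.\ one covering step is replaced by another, which is exactly a swap. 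Handling the edge cases at $k = 0$ and $k = n$ (where relaxing the inequality leaves the polytope, so the facet is on $\partial\O(\Z_n)$) and checking that the chosen global order is genuinely compatible with all swap moves simultaneously will require care but should be routine once the chain picture is set up.
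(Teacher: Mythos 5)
Your outline follows the same route as the paper: characterize facet-sharing in the canonical triangulation as the swap relation (this is the paper's Proposition \ref{prop:swapfacets}, and your chain-in-$J(\Z_n)$ picture is a fine way to prove it), order the simplices by weakly decreasing inversion number, observe that each $\Delta^\sigma$ then has exactly $\swap(\sigma)$ earlier facet-neighbors, and apply Theorem \ref{thm:hstarshelling}. But there is a genuine gap in how you verify the shelling condition. Your conditions (i) and (ii) only control earlier simplices that meet $\Delta^\sigma$ in a \emph{facet}; by your own facet characterization plus the fact that a swap raises the inversion count by one (you state this backwards --- the swap creates the inversion $(i,i+1)$, it does not remove it --- but the intended conclusion is right), condition (ii) is essentially automatic. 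What the shelling definition actually requires, and what (i)+(ii) do not deliver, is that for \emph{every} earlier $\Delta^\tau$, including those meeting $\Delta^\sigma$ in a face of codimension $\geq 2$, the intersection $\Delta^\tau \cap \Delta^\sigma$ is contained in one of the swap-facets $F_k$, $k \in \Swap(\sigma)$. Equivalently, one must show that no $\tau$ with $\inv(\tau) \geq \inv(\sigma)$, $\tau \neq \sigma$, has $\vert(\tau) \supset \excl(\sigma)$, where $\excl(\sigma) = \{\bv_k^\sigma : k \in \Swap(\sigma)\}$; otherwise such a $\tau$ contributes a low-dimensional face sticking outside the union of the $F_k$ and the order fails to be a shelling at $\Delta^\sigma$. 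Your claim that (i)+(ii) ``shows $\bigcup_{\tau\text{ earlier}}(\Delta^\tau\cap\Delta^\sigma)$ is a union of facets'' is exactly the unproved step.

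This containment statement is where the paper's real work lies, and nothing in your proposal substitutes for it: one needs that every relevant non-inversion of $\sigma$ can be read off from some vertex in $\excl(\sigma)$ (the paper's Lemma \ref{lem:swapbetweennoninv}, via the existence of a swap between the entries of any relevant non-inversion), which shows $\sigma$ maximizes inversion number among alternating permutations whose vertex set contains $\excl(\sigma)$ (Proposition \ref{prop:maxinversion}), and moreover that this maximizer is \emph{unique} (Proposition \ref{prop:uniquemax}) --- the uniqueness is needed because your order refines decreasing inversion number only weakly, so permutations with $\inv(\tau) = \inv(\sigma)$ may legitimately appear earlier and must also be excluded. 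Your proposed ``precise bookkeeping of intersections'' only re-derives the facet characterization of paragraph one; it does not address these codimension-$\geq 2$ intersections, so as written the proof of the shelling property is incomplete.
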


\begin{remark}
Alternate formulas for the $h^*$-polynomial of the order polytope of a poset $P$ exist,
as described in \cite[Chapter~3.13]{ec1}.
However, many of these formulas refer to the \emph{Jordan-H\"older set} of $P$
and in particular, descents in the permutations in this set,
which we will discuss in more detail in Section \ref{sec:rankselection}.
In the case of $\Z_n$, the elements of this Jordan-H\"older set are not alternating or inverse alternating permutations
because they arise from linear extensions with respect to a natural labeling of $\Z_n$.
The elements of the Jordan-H\"older set do not have as nice of a combinatorial description as the alternating permutations,
and there is not an obvious bijection between swaps in alternating permutations and descents in elements of
the Jordan-H\"older set.
\end{remark}


\section{Shelling the Canonical Triangulation of the Order Polytope}\label{sec:shelling}

In this section we describe a family of shelling orders on the simplices
of the canonical triangulation of $\O(\Z_n)$.  
Let $\sigma$ be an alternating permutation. We will denote by 
$\vert(\sigma)$ the set of all vertices of the simplex $\Delta^\sigma$.
 Note that this is the set of all $0/1$ vectors $\bv$ of length $n$ 
 that have $v_i \leq v_j$ whenever $\sigma(i) < \sigma(j)$.

\begin{prop}\label{prop:swapfacets}
The simplices $\Delta^{\sigma}$ and $\Delta^{\tau}$ are joined along a facet 
if and only if $\sigma$ swaps to $\tau$ or $\tau$ swaps to $\sigma$.
\end{prop}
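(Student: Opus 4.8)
The plan is to translate the geometric condition "$\Delta^\sigma$ and $\Delta^\tau$ share a facet" into a combinatorial condition on the vertex sets $\vert(\sigma)$ and $\vert(\tau)$, and then show that this combinatorial condition holds exactly when one permutation swaps to the other. Since both simplices are unimodular and of full dimension $n$ inside $\O(\Z_n)$, sharing a facet is equivalent to $|\vert(\sigma) \cap \vert(\tau)| = n$; equivalently, $\Delta^\sigma$ and $\Delta^\tau$ differ in exactly one vertex. So the first step is to set up a clean description of $\vert(\sigma)$: as noted in the excerpt, $\vert(\sigma)$ is the set of $0/1$ vectors $\bv$ with $v_i \le v_j$ whenever $\sigma(i) < \sigma(j)$, and these are precisely the indicator vectors of the "upper sets" $\{i : \sigma(i) > k\}$ for $k = 0, 1, \dots, n$. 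In other words, if we list the positions in increasing order of $\sigma$-value, the vertices $\bv^\sigma_0, \dots, \bv^\sigma_n$ are obtained by zeroing out the coordinates one at a time in that order, which matches Stanley's recursive definition $\bv^\sigma_i = \bv^\sigma_{i-1} - \be_{\sigma^{-1}(i)}$.

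Next I would prove the "if" direction. Suppose $\sigma$ swaps to $\tau$, i.e. $\tau$ is obtained from $\sigma$ by exchanging the values $k$ and $k+1$, where $k$ sits to the left of $k+1$ in $\sigma$. Then for every $j \ne k$ the upper set $\{i : \sigma(i) > j\}$ equals $\{i : \tau(i) > j\}$, so $\bv^\sigma_j = \bv^\tau_j$; the only vertex that can change is $\bv^\sigma_k$ versus $\bv^\tau_k$, and these genuinely differ since $\sigma^{-1}(k) \ne \tau^{-1}(k)$. Hence $\vert(\sigma)$ and $\vert(\tau)$ agree in $n$ of their $n+1$ vertices, so the simplices meet along the common facet spanned by those $n$ vertices. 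One should also note $\sigma$ and $\tau$ are distinct as permutations so this is an honest facet, not the whole simplex.

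The harder direction is "only if," and this is where I expect the main obstacle. Suppose $\Delta^\sigma$ and $\Delta^\tau$ share a facet, so $\vert(\sigma)$ and $\vert(\tau)$ agree in all but one vertex. Because both vertex sets are \emph{chains} under coordinatewise $\le$ (each is a maximal chain $\ones = \bw_0 > \bw_1 > \cdots > \bw_n = \mathbf{0}$ in the Boolean lattice refining the poset order), and two maximal chains in a graded poset that agree everywhere except at one rank differ by a single "elementary exchange" at that rank, the differing vertices $\bv^\sigma_k$ and $\bv^\tau_k$ must both cover $\bv^\sigma_{k+1}$ and be covered by $\bv^\sigma_{k-1}$. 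This forces $\bv^\sigma_k$ and $\bv^\tau_k$ to differ in exactly two coordinates, say positions $a = \sigma^{-1}(k)$ and $b = \tau^{-1}(k)$, and a short argument on the chain structure shows $\tau$ is obtained from $\sigma$ by transposing the values $k$ and $k+1$ in their one-line notations, with these values landing in positions $a$ and $b$. The remaining work is to verify that such a transposition of $\sigma$ actually produces an alternating permutation (it does, because it is one of the simplices in the triangulation, hence indexed by a linear extension of $\Z_n$, i.e. an alternating permutation) and that the value appearing earlier is $k$ rather than $k+1$ (otherwise it would be $\tau$ that swaps to $\sigma$) — which gives exactly the dichotomy in the statement. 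The delicate point to get right is the claim that agreement of the two chains off a single rank forces a \emph{single-coordinate-pair} exchange there; this is a general fact about maximal chains in the Boolean lattice, and I would isolate it as a small lemma before invoking it.
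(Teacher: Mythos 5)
Your proposal is correct and follows essentially the same route as the paper: reduce facet-sharing to the vertex sets differing in a single element, use the fact that each simplex has exactly one vertex of each coordinate sum to localize the difference to one rank, and then read off from the definition of $\bv^\sigma_i$ that the two permutations differ by exchanging the consecutive values $i$ and $i+1$, i.e.\ one swaps to the other. Your Boolean-lattice ``elementary exchange'' lemma is just an abstract packaging of the same two-line computation the paper does directly, so no substantive difference.
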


\begin{proof}
Simplices $\Delta^{\sigma}$ and $\Delta^{\tau}$ are joined along a facet 
if and only if $\vert(\sigma)$ and $\vert(\tau)$ differ by a single element. 
Since every simplex in the canonical triangulation of $\O(\Z_n)$ has exactly 
one vertex with the sum of its components equal to $i$ for $0 \leq i \leq n$ 
and the all $0$'s and all $1$'s vector are in every simplex in this triangulation, 
this occurs if and only if there exists an $i$ with $1 \leq i \leq n-1$ such that 
$\vert(\sigma) - \{ \bv_i^{\sigma} \} = \vert(\tau) - \{\bv_i^{\tau} \}$. 
By definition of each $\bv_j^{\sigma}$ and $\bv_j^{\tau}$, this occurs if and only 
if $\sigma^{-1}(j) = \tau^{-1}(j)$ for all $j \neq i, i+1$ and 
$\be_{\sigma^{-1}(i)} + \be_{\sigma^{-1}(i+1)} = \be_{\tau^{-1}(i)} + \be_{\tau^{-1}(i+1)}$. 
This is true if and only if swapping the positions of $i$ and $i+1$ in $\sigma$ yields $\tau$, as needed.
\end{proof}

Denote by $\inv(\sigma)$ the number of inversions of a permutation $\sigma$; 
that is, $\inv(\sigma)$  is the number of pairs $i < j$ such that $\sigma(i) > \sigma(j)$. 
We similarly define a \emph{non-inversion} to be a pair $i < j$ with $\sigma(i) < \sigma(j)$. 
We call an inversion or non-inversion $(i,j)$ \emph{relevant} if $i < j-1$; 
that is, if it is not required by the structure of an alternating permutation. 
Note that performing a swap on an alternating permutation always increases 
its inversion number by exactly one.

The following lemma relates relevant non-inversions to swaps in between them.

\begin{lemma}\label{lem:swapbetweennoninv}
Let $\sigma$ be an alternating permutation.
Let $a, b \in [n]$ such that $(\sigma^{-1}(a), \sigma^{-1}(b))$ is a relevant non-inversion of $\sigma$.
Then there exists a $k$ with $a \leq k < b$ such that $k$ is a swap of $\sigma$.
\end{lemma}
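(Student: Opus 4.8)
The plan is to induct on $b - a$. Throughout write $i = \sigma^{-1}(a)$ and $j = \sigma^{-1}(b)$, so the hypothesis says $i < j-1$ and $a < b$ (the latter being exactly what distinguishes a non-inversion from an inversion of the pair $(i,j)$). For the base case $b - a = 1$, the relevance condition $i < j-1$ reads $\sigma^{-1}(a) < \sigma^{-1}(a+1) - 1$, which is precisely the condition for $a$ to be a swap of $\sigma$; so $k = a$ works.

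For the inductive step, assume $b - a \geq 2$. First test whether $a$ itself is a swap, i.e.\ whether $\sigma^{-1}(a) < \sigma^{-1}(a+1) - 1$; if so, take $k = a$, which satisfies $a \leq k < b$ since $a < b$, and we are done. Otherwise $\sigma^{-1}(a+1) \leq \sigma^{-1}(a) + 1 = i+1$, and since $\sigma^{-1}$ is injective this leaves the two cases $\sigma^{-1}(a+1) = i+1$ and $\sigma^{-1}(a+1) < i$. In both cases I claim $(\sigma^{-1}(a+1), j) = (\sigma^{-1}(a+1), \sigma^{-1}(b))$ is again a relevant non-inversion of $\sigma$. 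Granting the claim, the inductive hypothesis applies (since $b - (a+1) < b - a$ and $a+1 < b$ as $b - a \geq 2$) and produces a swap $k$ with $a+1 \leq k < b$, hence $a \leq k < b$, as required.

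It remains to verify the claim, and this is the only point where the alternating structure enters. We have $a+1 < b$, and $\sigma^{-1}(a+1) \leq i+1 \leq j-1 < j$ (using $i < j-1$), so $(\sigma^{-1}(a+1), j)$ is indeed a non-inversion. For relevance we need $\sigma^{-1}(a+1) \leq j-2$. When $\sigma^{-1}(a+1) < i$ this is immediate from $\sigma^{-1}(a+1) < i \leq j-2$. The remaining case is $\sigma^{-1}(a+1) = i+1$, where relevance amounts to $j \geq i+3$. Here I would argue that $j \neq i+2$: if $j = i+2$, then positions $i, i+1, i+2$ of $\sigma$ carry the values $a, a+1, b$ with $a < a+1 < b$, giving $\sigma(i) < \sigma(i+1) < \sigma(i+2)$ — two consecutive ascents, impossible in an alternating permutation (where ascents occur only at odd positions and descents only at even positions). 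Since $j \geq i+2$ by relevance of the original pair, this forces $j \geq i+3$, so $\sigma^{-1}(a+1) = i+1 < j-1$, completing the claim.

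The main obstacle is this last structural step — ruling out $j = i+2$ via the no-two-consecutive-ascents property of alternating permutations; everything else is bookkeeping to reduce the relevant non-inversion $(\sigma^{-1}(a),\sigma^{-1}(b))$ to $(\sigma^{-1}(a+1),\sigma^{-1}(b))$ and invoke induction. (One can also phrase the reduction non-recursively by following the walk $\sigma^{-1}(a), \sigma^{-1}(a+1), \dots, \sigma^{-1}(b)$ of positions and using the first index at which it exceeds $\sigma^{-1}(a)$, but the inductive formulation above seems cleanest.)
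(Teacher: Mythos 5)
Your proof is correct and takes essentially the same approach as the paper's: induction on $b-a$, analyzing the position of $a+1$, with the key structural fact that an alternating permutation cannot contain three adjacent increasing values. The cases are carved slightly differently (the paper compares $\sigma^{-1}(a+1)$ with $\sigma^{-1}(b)$ and concludes directly that $a$ is a swap in the adjacent case, whereas you recurse through $(\sigma^{-1}(a+1),\sigma^{-1}(b))$), but the argument is the same.
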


\begin{proof}
We proceed by induction on $b-a$. 
If $b-a = 1$, then since $(i,j)$ is a relevant 
non-inversion, $a$ is a swap in $\sigma$.

Let $b-a > 1$. Consider the position of $a+1$ in $\sigma$. There are three cases.
If $\sigma^{-1}(a+1) < \sigma^{-1}(b)-1$, then $(\sigma^{-1}(a+1), \sigma^{-1}(b))$ 
is a relevant non-inversion, and we are done by induction. If $\sigma^{-1}(a+1) >\sigma^{-1}(b)$, 
then $a$ is a swap in $\sigma$. If $\sigma^{-1}(a+1) = \sigma^{-1}(b)-1$, 
then note that $\sigma^{-1}(a) < \sigma^{-1}(a+1)-1$ since otherwise, 
$a, a+1, b$ would be an adjacent increasing 
sequence in $\sigma$, which would contradict that $\sigma$ is alternating. 
So $a$ is a swap in $\sigma$, as needed.
\end{proof}

Theorem \ref{thm:ehrhartseries} follows as a 
corollary of Theorem \ref{thm:hstarshelling}, Proposition \ref{prop:swapfacets} and the following theorem.

\begin{theorem}\label{thm:shellingorder}
Let $\sigma_1, \dots, \sigma_{E_n}$ be an order on the alternating permutations 
such that if $i < j$ then $\inv(\sigma_i) \geq \inv(\sigma_j)$. 
Then the order $\Delta^{\sigma_1}, \dots, \Delta^{\sigma_{E_n}}$ 
on the simplices of the canonical triangulation of $\O(\Z_n)$ is a shelling order.
\end{theorem}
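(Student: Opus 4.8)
The plan is to use the equivalent reformulation of shelling stated right after the definition: the ordering $\Delta^{\sigma_1}, \dots, \Delta^{\sigma_{E_n}}$ is a shelling order if and only if for every $r$ and every $k < r$, there exists $i < r$ such that $\Delta^{\sigma_i} \cap \Delta^{\sigma_r}$ is a facet of $\Delta^{\sigma_r}$ and contains $\Delta^{\sigma_k} \cap \Delta^{\sigma_r}$. Fix $r$ and $k < r$; write $\sigma = \sigma_r$ and $\tau = \sigma_k$. Since $\inv(\tau) \geq \inv(\sigma)$ (because $k < r$), the permutation $\tau$ is ``not below'' $\sigma$ in inversion count, and I want to produce an alternating permutation $\rho$ with $\inv(\rho) > \inv(\sigma)$ (so $\rho$ appears before $\sigma$ in the order), such that $\rho$ swaps to $\sigma$ (so by Proposition \ref{prop:swapfacets}, $\Delta^\rho \cap \Delta^\sigma$ is a facet of $\Delta^\sigma$), and such that $\vert(\rho) \cap \vert(\sigma) \supseteq \vert(\tau) \cap \vert(\sigma)$, which translates the containment of intersections into vertex-set language via the identification of each $\Delta^\pi$ with $\vert(\pi)$.

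The key is to find the right $\rho$. Recall from the proof of Proposition \ref{prop:swapfacets} that $\Delta^\rho \cap \Delta^\sigma$ being a facet of $\Delta^\sigma$ means $\vert(\rho)$ and $\vert(\sigma)$ differ in exactly one element, and the facet is $\vert(\sigma) \setminus \{\bv_\ell^\sigma\}$ for the relevant index $\ell$. So I need to choose an index $\ell$ such that (a) swapping $\ell$ and $\ell+1$ in $\sigma$ (to get $\rho$) is a legal swap applied to $\rho$, i.e. in $\rho$ the value $\ell$ sits to the left of $\ell+1$ with a gap — equivalently, in $\sigma$ the value $\ell+1$ is to the left of $\ell$ and swapping them keeps alternating; and (b) $\bv_\ell^\sigma \notin \vert(\tau)$, so that dropping $\bv_\ell^\sigma$ does not lose any vertex of $\vert(\tau) \cap \vert(\sigma)$. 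Since $\tau \neq \sigma$, the vertex sets $\vert(\tau)$ and $\vert(\sigma)$ differ, so there is some index $\ell$ with $\bv_\ell^\sigma \notin \vert(\tau)$; the content of the argument is that among such indices one can choose one that also satisfies (a). Here is where Lemma \ref{lem:swapbetweennoninv} enters: the failure of $\bv_\ell^\sigma$ to lie in $\vert(\tau)$ means there is a pair of values realizing an inversion in $\sigma$ that is a non-inversion in $\tau$ (or vice versa); using $\inv(\tau) \geq \inv(\sigma)$ together with the lemma, I expect to locate a relevant inversion $(\sigma^{-1}(b), \sigma^{-1}(a))$ of $\sigma$ — equivalently a relevant non-inversion positioned appropriately — and then the lemma yields a swap index $\ell$ in the right range; one checks that this $\ell$ also satisfies $\bv_\ell^\sigma \notin \vert(\tau)$ because $\bv_\ell^\sigma$ records exactly the threshold between values $\le \ell$ and $> \ell$ in $\sigma$, which $\tau$ orders differently.

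The main obstacle I anticipate is precisely this coordination step: simultaneously guaranteeing that the swap index $\ell$ produced is one for which $\bv_\ell^\sigma$ is genuinely absent from $\vert(\tau)$, and that the resulting $\rho$ has strictly larger inversion number than $\sigma$ (the latter is automatic since any swap increases $\inv$ by one, as noted before Lemma \ref{lem:swapbetweennoninv}, but I must make sure $\rho$ is obtained from $\sigma$ by undoing a swap rather than performing one). I would organize this by first characterizing $\vert(\sigma) \setminus \vert(\tau)$ explicitly: $\bv_\ell^\sigma \in \vert(\tau)$ iff the set $\{j : \sigma(j) > \ell\}$ is an order ideal complement compatible with $\tau$, which fails exactly when some pair is inverted differently. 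Then I would pick the largest such ``bad'' $\ell$, argue via the alternating structure and Lemma \ref{lem:swapbetweennoninv} that $\ell$ or some nearby index is a legitimate reverse-swap for $\sigma$, and finally verify the two containment and inversion conditions. Once $\rho$ is in hand, the conclusion $\Delta^\rho \cap \Delta^\sigma \supseteq \Delta^\tau \cap \Delta^\sigma$ with the left side a facet of $\Delta^\sigma$ is immediate, and since $\inv(\rho) > \inv(\sigma)$ gives $\rho = \sigma_i$ for some $i < r$, this completes the verification of the shelling condition.
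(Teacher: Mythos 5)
Your reduction of the shelling condition is essentially the right one: you need, for each earlier $\tau$, a swap index $\ell$ of $\sigma$ with $\bv_\ell^{\sigma} \notin \vert(\tau)$, which is precisely the paper's condition $\excl(\sigma) \not\subset \vert(\tau)$. But two things keep the proposal from being a proof. First, there is a directional error: you ask for $\rho$ with $\inv(\rho) > \inv(\sigma)$ such that $\rho$ swaps to $\sigma$, and later say $\rho$ should be ``obtained from $\sigma$ by undoing a swap rather than performing one.'' Since performing a swap increases the inversion number by exactly one, a $\rho$ that swaps to $\sigma$ satisfies $\inv(\rho) = \inv(\sigma) - 1$ and hence comes \emph{after} $\sigma$ in the order, so it cannot serve as the earlier simplex $\Delta^{\sigma_i}$; what is needed is that $\sigma$ swaps to $\rho$ (so $\ell$ is a swap of $\sigma$, $\rho$ is obtained by performing it, and the shared facet is $\vert(\sigma)\setminus\{\bv_\ell^{\sigma}\}$). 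Second, and more seriously, the crux---that for every $\tau \neq \sigma$ with $\inv(\tau) \geq \inv(\sigma)$ such an $\ell$ exists---is only announced (``I expect,'' ``I would''), not carried out; in particular you never treat the equality case $\inv(\tau) = \inv(\sigma)$, which is exactly where the paper has to work hardest, proving via Propositions \ref{prop:maxinversion} and \ref{prop:uniquemax} that $\sigma$ is the \emph{unique} inversion-maximizer among alternating permutations whose vertex set contains $\excl(\sigma)$.

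For comparison, the direct route you sketch can in fact be completed, and it would differ from the paper's argument: since the set of inverted position pairs determines a permutation, $\tau \neq \sigma$ together with $\inv(\tau) \geq \inv(\sigma)$ forces a position pair $(i,j)$ with $i<j$, $\sigma(i) < \sigma(j)$ but $\tau(i) > \tau(j)$; this pair is automatically a \emph{relevant} non-inversion of $\sigma$ because adjacent ascents are forced for both alternating permutations; Lemma \ref{lem:swapbetweennoninv} then yields a swap $k$ of $\sigma$ with $\sigma(i) \leq k < \sigma(j)$, and if $\bv_k^{\sigma}$ belonged to $\vert(\tau)$ its zero pattern would force $\tau(i) \leq k < \tau(j)$, a contradiction. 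That completion would bypass the uniqueness construction of Proposition \ref{prop:uniquemax} altogether, whereas the paper routes everything through the maximality and uniqueness statements. As written, however, your proposal stops at the plan stage and has the swap direction reversed, so the key step is missing.
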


Note that since performing a swap increases
inversion number by exactly one,
the condition of Theorem \ref{thm:shellingorder} implies that
if $\sigma_j$ swaps to $\sigma_i$, then $i < j$.
For any alternating permutation $\sigma$, define the 
\emph{exclusion set} of $\sigma$, $\excl(\sigma)$ to be the set of all 
$\bv_k^{\sigma} \in \vert(\sigma)$ such that $k$ is a swap in $\sigma$. In other words,
\[
\excl(\sigma)  = \{\bv \mid \bv \in \Delta^{\sigma} - \Delta^{\tau} \text{ for some } \tau \text{ such that }  \sigma \mbox{ swaps to } \tau  \}. 
\]
 In the proof of Theorem \ref{thm:shellingorder}, we will show that Proposition \ref{prop:swapfacets} implies that in order to prove Theorem \ref{thm:shellingorder}, it suffices to check that if $\inv(\sigma) \leq \inv(\tau)$, then $\excl(\sigma) \not\subset \vert(\tau)$. This fact follows from the next two propositions.

\begin{prop}\label{prop:maxinversion}
An alternating permutation $\sigma$ maximizes inversion number over all alternating permutations $\tau$ with $\excl(\sigma) \subset \vert(\tau)$.
\end{prop}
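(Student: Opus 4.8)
The plan is to show that if $\sigma$ is an alternating permutation and $\tau$ is another alternating permutation with $\excl(\sigma) \subset \vert(\tau)$, then $\inv(\tau) \leq \inv(\sigma)$. The natural way to compare inversion numbers is to compare, for each pair $a < b$ in $[n]$, whether $(a,b)$ is an inversion of $\sigma$ but not of $\tau$, versus the reverse. So I would try to prove the stronger combinatorial statement that \emph{every} relevant non-inversion of $\tau$ is also a non-inversion of $\sigma$; since the non-relevant pairs are forced to be non-inversions in both (by the alternating structure), this would give an injection from non-inversions of $\tau$ into non-inversions of $\sigma$, hence $\inv(\sigma) \le \inv(\tau)$ would be reversed — wait, more carefully: more non-inversions means fewer inversions, so if every non-inversion of $\tau$ is a non-inversion of $\sigma$, then $\sigma$ has at least as many non-inversions, hence at most as many inversions; but we want the opposite. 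Let me reframe: I want to show every relevant non-inversion of $\sigma$ remains a non-inversion of $\tau$, i.e.\ $\Swap$-type exclusions force $\tau$ to respect all the orderings that $\sigma$ respects among "swappable" pairs. That gives $\inv(\tau) \leq \inv(\sigma)$.

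The key device is to translate the hypothesis $\excl(\sigma)\subset\vert(\tau)$ into a statement about the linear extension $\tau$. Recall $\bv_k^\sigma \in \vert(\tau)$ means the $0/1$ vector with $1$'s exactly in positions $\sigma^{-1}(k+1),\dots,\sigma^{-1}(n)$ (equivalently, the indicator of $\{\,p : \sigma(p) > k\,\}$) is a valid vertex of $\Delta^\tau$, which by the description of $\vert(\tau)$ means: whenever $\tau(i) < \tau(j)$, if $v_j^\sigma$-coordinate... concretely, the set $S_k := \{\,p : \sigma(p) > k\,\}$ must be an \emph{order ideal complement} (up-set) for the order induced by $\tau$ — i.e.\ if $\tau(i) < \tau(j)$ and $i \in S_k$ then $j \in S_k$. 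So the hypothesis says: for every swap $k$ of $\sigma$, the up-set $\{p : \sigma(p) > k\}$ of the $\sigma$-order is also an up-set of the $\tau$-order. Using Lemma~\ref{lem:swapbetweennoninv}, I can then argue as follows: suppose for contradiction $(\sigma^{-1}(a),\sigma^{-1}(b))$ is a relevant non-inversion of $\sigma$ (so $a < b$ and $a$ appears suitably before $b$) that becomes an inversion of $\tau$, meaning $\tau^{-1}(b) < \tau^{-1}(a)$. By the lemma there is a swap $k$ of $\sigma$ with $a \le k < b$; then in the up-set $T = \{p : \sigma(p) > k\}$ we have $\sigma^{-1}(b) \in T$ but $\sigma^{-1}(a) \notin T$ (since $a \le k < b$). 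I would then show this $T$ fails to be a $\tau$-up-set: because $\tau^{-1}(b) < \tau^{-1}(a)$ gives a $\tau$-comparability (or chain of cover relations) from the position of $b$ up to the position of $a$, and $b$'s position is in $T$ while $a$'s is not, contradicting that $T$ is a $\tau$-up-set. The one subtlety is that $\tau^{-1}(b) < \tau^{-1}(a)$ as positions does not immediately give $\tau^{-1}(a)$ \emph{covered above} $\tau^{-1}(b)$ in the poset $\Z_n$; I need to walk up the zig-zag cover relations. Here is where I use that $T$ is an up-set for \emph{every} relevant swap: a careful local analysis of the positions of $a$ and $b$ in the zig-zag pattern of $\tau$, combined with possibly invoking the lemma again at intermediate values, should close the gap.

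The main obstacle I anticipate is exactly this last point: converting "$b$ precedes $a$ in one-line notation of $\tau$" into a violation of the up-set condition, since the up-set condition only directly constrains cover relations (adjacent positions in the zig-zag), not arbitrary position order. I expect to handle this by an induction on $\tau^{-1}(a) - \tau^{-1}(b)$ mirroring the induction in Lemma~\ref{lem:swapbetweennoninv}, or by choosing among all relevant non-inversions of $\sigma$ that get reversed by $\tau$ one that is extremal (say with $b - a$ minimal, or with the positions in $\tau$ as close as possible), so that the intermediate values are controlled. Once the up-set violation is produced, the contradiction is immediate from the reformulated hypothesis, and the inequality $\inv(\tau) \le \inv(\sigma)$ follows by counting relevant non-inversions. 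Equality when $\tau = \sigma$ is clear, giving that $\sigma$ indeed maximizes inversion number among all such $\tau$.
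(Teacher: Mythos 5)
Your plan is in substance the paper's proof: use Lemma \ref{lem:swapbetweennoninv} to attach to each relevant non-inversion of $\sigma$ a swap $k$, so that the vertex $\bv_k^{\sigma} \in \excl(\sigma)$ records that non-inversion as a $0$--$1$ pattern, and then use $\excl(\sigma) \subset \vert(\tau)$ to conclude that every non-inversion of $\sigma$ is a non-inversion of $\tau$, hence $\inv(\tau) \le \inv(\sigma)$. However, your final step, as written, has a gap, and the machinery you propose to close it (a second induction mirroring the lemma, or an extremal choice of reversed pair, walking up cover relations of the zig-zag) is aimed at a difficulty that does not exist. The trouble comes from mixing value-pairs with position-pairs: you phrase ``$(a,b)$ becomes an inversion of $\tau$'' as $\tau^{-1}(b) < \tau^{-1}(a)$, i.e., in terms of where the \emph{values} $a,b$ sit in $\tau$, and that quantity indeed is not directly controlled by the vertex condition --- but it is also not the quantity you need.

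The comparison should be made at the fixed pair of coordinates $i=\sigma^{-1}(a) < j=\sigma^{-1}(b)$ (with $a<b$ and $i<j-1$). The lemma gives a swap $k$ with $a \le k < b$, so $\bv_k^{\sigma}(i)=0$ and $\bv_k^{\sigma}(j)=1$. Now $\vert(\tau)$ is by definition the set of $0/1$ vectors $\bv$ with $v_p \le v_q$ whenever $\tau(p) < \tau(q)$; since $\tau$ is a permutation, this is a condition with respect to a \emph{total} order on positions, so it constrains every pair of coordinates, not merely cover relations of $\Z_n$ (the zig-zag covers are irrelevant here --- your ``up-set of the $\tau$-order'' is an up-set for a linear order, hence simply a final segment). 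Thus $\bv_k^{\sigma} \in \vert(\tau)$ immediately forces $\tau(i) < \tau(j)$: otherwise $\tau(j) < \tau(i)$ would require $1 = \bv_k^{\sigma}(j) \le \bv_k^{\sigma}(i) = 0$. No induction on $\tau^{-1}(a)-\tau^{-1}(b)$ and no extremal choice is needed. With this bookkeeping corrected, your argument closes and coincides with the paper's: relevant non-inversions of $\sigma$ transfer to $\tau$ by the above, the adjacent (non-relevant) non-inversions are common to all alternating permutations, so $\tau$ has at least as many non-inversions as $\sigma$ and therefore $\inv(\tau) \le \inv(\sigma)$.
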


\begin{proof}
Consider a vertex $\bv_k^{\sigma} \in \vert(\sigma)$. Note that we may read all of the
non-inversions $(i,j)$ with $\sigma(i) \leq k < \sigma(j)$ from $\bv_k^{\sigma}$ 
since these correspond to pairs of positions in $\bv_k^{\sigma}$ with a $0$ in the 
first position and a $1$ in the second. That is to say, we have $\bv_k^{\sigma}(i) = 0$, 
$\bv_k^{\sigma}(j) = 1$, and $i < j$.

We claim that every relevant non-inversion of $\sigma$ can be read from 
an element of $\excl(\sigma)$ in this way.
By Lemma \ref{lem:swapbetweennoninv}, there exists a swap $k$ in $\sigma$ with 
$\sigma(i) \leq k < \sigma(j)$, and the relevant non-inversion 
$(i,j)$ can be read from $\bv_k^{\sigma}$ in the manner described above. 

Therefore, all relevant non-inversions in $\sigma$ can be found as a 
non-adjacent $0-1$ pair in a vertex in $\excl(\sigma)$. In particular, 
we can count the number of relevant non-inversions in $\sigma$ from the 
vertices in $\excl(\sigma)$. Furthermore, if $\excl(\sigma) \subset \vert(\tau)$, 
then all non-inversions in $\sigma$ must also be non-inversions in $\tau$, 
though $\tau$ can contain more non-inversions as well. So $\sigma$ minimizes 
the number of non-inversions, and therefore maximizes the number of inversions, 
over all $\tau$ with $\excl(\sigma) \subset \vert(\tau)$.
\end{proof}

\begin{prop}\label{prop:uniquemax}
Let $S \subset \vert(\O(\Z_n))$ be contained in 
$\vert(\sigma)$ for some alternating $\sigma$.  Then there exists a unique 
alternating $\hat{\sigma}$ that maximizes inversion number over all 
alternating permutations whose vertex set contains $S$. 
\end{prop}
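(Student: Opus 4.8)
The plan is to characterize explicitly the set of alternating permutations $\tau$ whose vertex set contains $S$, and then show this set has a unique element of maximal inversion number by exhibiting it directly. First I would recall that a $0/1$ vector $\bv$ lies in $\vert(\tau)$ exactly when $\bv$ is weakly order-preserving for the linear order induced by $\tau$, i.e. $\bv(i) \leq \bv(j)$ whenever $\tau(i) < \tau(j)$. Equivalently, writing the support of $\bv$ as an up-set, each $\bv \in S$ imposes on $\tau$ the constraint that a certain collection of "$0$ positions" must all precede (in $\tau$-value) a certain collection of "$1$ positions." Aggregating over all $\bv \in S$, the condition $S \subset \vert(\tau)$ is equivalent to a set of constraints of the form "$\tau(i) < \tau(j)$" for certain pairs $(i,j)$ — that is, $\tau$ must be a linear extension of a partial order $Q_S$ on $[n]$ that refines $\Z_n$. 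Because $S$ is by hypothesis contained in $\vert(\sigma)$ for some alternating $\sigma$, the poset $Q_S$ has at least one such linear extension (namely $\sigma$), so the set of valid $\tau$ is nonempty.

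The second step is to identify which linear extension of $Q_S$ maximizes inversion number. Here I would use the observation already exploited in Proposition \ref{prop:maxinversion}: among all alternating permutations $\tau$ with $S \subset \vert(\tau)$, minimizing non-inversions is the same as maximizing inversions, and a non-inversion $(i,j)$ of $\tau$ with $i < j-1$ is exactly a "relevant non-inversion." The relevant non-inversions that are \emph{forced} are precisely the relations of $Q_S$ that are not already relations of $\Z_n$; a permutation attaining the maximum inversion number is one that has \emph{no} relevant non-inversions beyond the forced ones. So I would construct $\hat\sigma$ as the unique alternating permutation whose set of relevant non-inversions is exactly the forced set, and verify it is well-defined: build the partial order $Q_S$, and show that declaring $p_i <_{\hat Q} p_j$ for $i<j$ precisely when this is forced by $Q_S$ (together with the zig-zag relations) still yields a valid partial order refining $\Z_n$ each of whose linear extensions that is alternating must coincide on the comparabilities — hence gives a single alternating permutation. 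Concretely: scan the positions of $\Z_n$ and at each "free" adjacent pair insert $i,i+1$ (or $i+1,i$) according to whether the order between the values $i$ and $i+1$ is forced, always choosing $i+1$ before $i$ when it is not forced; alternation then pins down everything else.

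The third step is uniqueness. Suppose $\tau$ is alternating, $S \subset \vert(\tau)$, and $\inv(\tau) = \inv(\hat\sigma)$. Then $\tau$ has the same number of relevant non-inversions as $\hat\sigma$, and since every forced relevant non-inversion must appear in $\tau$, and $\hat\sigma$'s relevant non-inversions are exactly the forced ones, $\tau$ and $\hat\sigma$ have precisely the same set of relevant non-inversions. Two alternating permutations with the same set of relevant non-inversions have the same set of all non-inversions (the non-relevant ones being dictated by alternation), hence agree on every pair $(i,j)$, hence are equal. This yields both existence and uniqueness of $\hat\sigma$.

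I expect the main obstacle to be the bookkeeping in step two: showing cleanly that the "forced" relevant non-inversions form a consistent set that is realized by an actual alternating permutation, rather than a mere partial order with several incomparable pairs. The subtlety is that forcing $\tau(i) < \tau(j)$ for various pairs, combined with the rigid zig-zag structure, could in principle over-constrain or under-constrain; one must check that at every free adjacent pair the "default" choice ($i+1$ before $i$) is always compatible with alternation and with all the forced constraints, using an argument in the spirit of Lemma \ref{lem:swapbetweennoninv} to propagate consistency. Once that is in hand, uniqueness is essentially formal.
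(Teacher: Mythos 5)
Your reduction is correct as far as it goes: encoding $S \subset \vert(\tau)$ as the condition that $\tau$ be a linear extension of a partial order $Q_S$ refining $\Z_n$ is right, and your uniqueness step is fine (arguably cleaner than the paper's): any $\tau$ attaining the maximal inversion number has relevant non-inversions containing the forced pairs and of the same cardinality as those of $\hat\sigma$, hence equal to them, and two alternating permutations with the same relevant non-inversions agree on every pair. But the entire content of the proposition sits in the step you defer: the \emph{existence} of an alternating permutation satisfying the constraints whose relevant non-inversions are exactly the forced pairs. For an arbitrary partial order refining $\Z_n$ this can fail: with $i<j<k$ one could have $(i,j)$ and $(j,k)$ free while $(i,k)$ is forced to be a non-inversion, and then not all free pairs can simultaneously be inversions. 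What rules this out here is structure you never invoke: since all vectors of $S$ lie in a single $\vert(\sigma)$, their zero sets form a chain under inclusion, so the positions are partitioned into groups $G_1,\dots,G_r$ and the condition $S\subset\vert(\tau)$ says precisely that $\tau$ sends $G_k$ onto the value block $\{m_{k-1}+1,\dots,m_k\}$; in particular every cross-group pair is forced and every within-group relevant pair is free. Your proposed construction (``scan the positions and at each free adjacent pair insert $i,i+1$ or $i+1,i$\,'') is not a well-defined procedure as written and does not engage with this block structure, and you yourself flag its consistency as the main unresolved obstacle.

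That consistency check is exactly what the paper's proof supplies: within each group place its value block in decreasing order along the group's positions, then transpose at the adjacent within-group pairs that alternation forces to be ascents; these forced-ascent pairs are pairwise non-adjacent, so the transpositions commute, and since the values in a group are consecutive and each transposition moves a value by one, every non-adjacent within-group pair remains an inversion afterwards. This produces an alternating permutation whose relevant non-inversions are exactly the forced ones, after which your formal uniqueness argument closes the proof. As it stands, your proposal correctly identifies the target and handles the easy half, but the construction and its verification --- the actual heart of Proposition \ref{prop:uniquemax} --- are missing.
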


\begin{proof}
Let $S = \{ \bs_0, \bs_1, \dots, \bs_r \}$ ordered by decreasing coordinate sum. 
We can assume that $S$ contains both the all zeroes and all ones vectors since those
vectors belong to the simplex $\Delta^\sigma$ for any alternating permutation $\sigma$.
Since $S \subset \vert(\sigma)$ for some alternating $\sigma$, 
if $\bs_i(j) = 0$, then $\bs_k(j) = 0 $ for all $k > i$. 
For $i = 1, \ldots, r$, let $m_i$ be the number of positions in $\bs_i$ that 
are equal to zero, and let $n_i = m_i - m_{i-1}$ (with $n_1 = m_1$).

Let $\tau$ be any alternating permutation such that $S \subseteq \vert(\tau)$.  
The $0$-pattern of 
each $\bs_i$ partitions the entries of all $\tau$ with $S \subset \vert(\tau)$ as follows:
For $1 \leq k \leq r$, the $n_k$ positions $j$ such that $\bs_k(j) = 0$ and $\bs_{k-1}(j) = 1$ 
are the positions of $\tau$ such that $\tau(j) \in \{m_{k-1} + 1, \dots, m_k \}$.

The positions of inversions and non-inversions across these groups are 
fixed for all $\tau$ with $S \subset \vert(\tau)$. We can build an alternating 
permutation $\hat{\sigma}$ that maximizes the inversions within each group as follows. 
For $1 \leq k \leq r$, let $j^k_1, \dots, j^k_{n_k}$ be the positions of $\hat{\sigma}$ 
that must take values in $\{ m_{k-1} +1, \dots, m_k \}$, as described above. 
We place these values in reverse; i.e.~map $j_l^k$ to $m_k - l + 1$. 
The permutation obtained in this way need not be alternating, 
so we switch adjacent positions that need to contain non-descents 
in order to make the permutation alternating. Note that we never need to 
make such a switch between groups, since the partition given by $S$ respects 
the structure of an alternating permutation. 

This permutation is unique because within the $k$th group, arranging the 
values in this way is equivalent to finding the permutation on $n_k$ elements 
with some fixed non-descent positions that maximizes inversion number. 
To obtain this permutation, we begin with the permutation $m_k \ m_k-1 \dots m_{k-1}+1$ 
and switch all the positions that must be non-descents. The alternating structure of 
the original permutation implies that none of these non-descent positions
can be adjacent, so these transpositions commute and give a unique permutation.
\end{proof}

\begin{example}\label{ex:uniquemax}
Let $n = 7$ and let
\[
S = \left\{ \begin{bmatrix}
1 \\ 1 \\ 1\\ 1\\ 1\\ 1\\ 1\\
\end{bmatrix}, \begin{bmatrix}
0 \\ 1 \\ 0 \\ 1 \\ 1 \\ 1 \\ 0
\end{bmatrix}, \begin{bmatrix}
0 \\ 1 \\ 0 \\ 0 \\ 0 \\ 0 \\ 0
\end{bmatrix}, \begin{bmatrix}
0\\ 0\\ 0\\ 0\\ 0\\ 0\\ 0\\
\end{bmatrix} \right\}
\]
We will construct $\hat{\sigma}$, the alternating permutation that maximizes 
inversion number overall alternating permuations whose vertex set contains $S$.
The second and third vertices in $S$ are the only one that gives information about the position
of each character; we will denote them $\bw_1$ and $\bw_2$, respectively. Since $\bw_1$ 
has $0$'s in exactly the first, third and seventh positions,
we know that $1,2$ and $3$ are in these positions. We insert them into these positions
in decreasing order, so that $\hat{\sigma}$ has the form
\[
3 \ \underline{\hspace{.25cm}} \ 2 \ \underline{\hspace{.25cm}} \  \ \underline{\hspace{.25cm}} \  \ \underline{\hspace{.25cm}} \  1.
\]

The zeros added in $\bw_2$ are in the fourth, fifth and sixth positions. 
Placing them in decreasing order yields the permutation
\[
3 \ \underline{\hspace{.25cm}} \ 2 \ 6 \ 5 \ 4 \ 1.
\]
However, this permutation cannot be alternating, since there must be an ascent from position 5 to position 6.
To create this ascent, we switch the entries in these positions, yielding a permutation of the form
\[
3 \ \underline{\hspace{.25cm}} \ 2 \ 6 \ 4 \ 5 \ 1.
\]

Finally, the only character missing is 7, which must go in the remaining space. This gives the permutation
\[
\hat{\sigma} = 3 \ 7 \ 2 \ 6 \ 4 \ 5 \ 1.
\]
\end{example}

\begin{proof}[Proof of Theorem \ref{thm:shellingorder}]
First, we claim that it suffices to show that for any alternating permutations $\sigma$ and $\tau$, if $\inv(\tau) \geq \inv(\sigma)$ then $\excl(\sigma) \not\subset \vert(\tau)$. 
Indeed, for any $\rho$ with $\inv(\rho) \geq \inv(\sigma)$,  by Proposition \ref{prop:swapfacets} we have that $\Delta_{\sigma} \cap \Delta_{\rho}$
is a facet of $\Delta_{\sigma}$ if and only if $\sigma$ swaps to $\rho$.
This is the case if and only if $\Delta_{\sigma} \cap \Delta_{\rho} = \Delta_{\sigma} \setminus \{ \bv_i \}$
for some $\bv_i \in \excl(\sigma)$.
So if $\Delta_{\sigma} \cap \Delta_{\tau} \not\subset \Delta_{\sigma} \cap \Delta_{\rho}$ for any $\rho$ 
such that $\inv(\rho) \geq \inv(\sigma)$ with $\Delta_{\sigma} \cap \Delta_{\rho}$ a facet of $\Delta_{\sigma}$,
then we must have $\excl(\sigma) \subset \vert(\tau)$.
The contrapositive of this statement shows that if $\excl(\sigma) \not\subset \vert(\tau)$, then the given order on
the facets of the triangulation is a shelling.

If $\inv(\tau) > \inv(\sigma)$, then since $\sigma$ maximizes inversion number over all alternating permutations that contain the exclusion set of $\sigma$ by Proposition \ref{prop:maxinversion}, $\excl(\sigma) \not\subset \vert(\tau)$. Furthermore, Proposition \ref{prop:uniquemax} implies that if $\inv(\tau) = \inv(\sigma)$, then $\excl(\sigma) \not\subset \vert(\tau)$ because $\sigma$ is the \emph{unique} permutation that maximizes inversion number of all alternating permutation that contain its exclusion set.
\end{proof}

\begin{proof}[Proof of Theorem \ref{thm:ehrhartseries}]
Let $\Delta^{\sigma_1}, \dots, \Delta^{\sigma_{E_n}}$ be a shelling order as described in Theorem \ref{thm:shellingorder}. Then by Proposition \ref{prop:swapfacets}, each $\Delta^{\sigma_i}$ is added in the shelling along exactly $\swap(\sigma_i)$ facets. Therefore, by Theorem \ref{thm:hstarshelling}, 
\[
h^*_{\O(\Z_N)}(t) = \sum_{\sigma \in A_n} t^{\swap(\sigma)},
\]
as needed.
\end{proof}

We conclude this section by remarking that not all of the shellings described
in Theorem \ref{thm:shellingorder} can be obtained from EL- or CL-labelings
of the lattice of order ideals of $\Z_n$.
Denote by $J(\Z_n)$ the distributive lattice of order ideals of $\Z_n$ ordered by inclusion.
Saturated chains in $J(\Z_n)$ are in bijection with elements of $A_n$ via the map that sends
an alternating permutation $\sigma$ to the chain of order ideals,
\[
I_0 \subsetneq I_1 \subsetneq I_2 \subsetneq \dots \subsetneq I_n
\]
where $I_j = \{ \sigma^{-1}(1), \dots, \sigma^{-1}(j) \}$ \cite[Chapter~3.5]{ec1}. 

\begin{definition}
Let $P$ be a graded bounded poset and let $E(P)$ be the set of cover relations of $P$. 
An \emph{EL-labeling} of $P$ is a labeling $\lambda$ of $E(P)$ with integers such that
\begin{itemize}
\item each closed interval $[a,b]$ of $P$ has a unique $\lambda$-increasing saturated chain, and
\item this $\lambda$-increasing chain lexicographically precedes all other saturated chains from $a$ to $b$.
\end{itemize}
A poset that has an EL-labeling is called \emph{EL-shellable}.
\end{definition}

For more details on poset shellability, we refer the reader to \cite{wachs2007}. 
If $P$ is EL-shellable with EL-labeling $\lambda$, then lexicographic order
on the saturated chains of $P$ with respect
to $\lambda$ gives a shelling of the order complex of $P$ \cite{bjorner1980}.
In the case of $J(\Z_n)$, its order complex is isomorphic to
the canonical triangulation of the order polytope $\O(\Z_n)$
via the bijection described above.
So finding EL-labelings of $J(\Z_n)$ is one way to construct
shellings of the canonical triangulations of $\O(\Z_n)$.
However, not all of the shellings described 
in Theorem \ref{thm:shellingorder} can be obtained in this way.

\begin{prop}\label{prop:ELshelling}
There exist shelling orders on the canonical triangulation of $\O(\Z_n)$
given by the conditions of Theorem \ref{thm:shellingorder} that cannot be obtained
from EL-labelings of $J(\Z_n)$.
\end{prop}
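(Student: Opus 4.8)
The plan is to exhibit, for some small $n$, a specific shelling order satisfying the hypotheses of Theorem~\ref{thm:shellingorder} and then show that no EL-labeling of $J(\Z_n)$ can induce it. The natural candidate is $n = 4$, where $A_4 = \{3412, 2413, 2314, 1423, 1324\}$ with inversion numbers $5, 4, 3, 3, 2$ respectively. The two permutations $2314$ and $1423$ both have inversion number $3$, so Theorem~\ref{thm:shellingorder} permits \emph{both} orders $\dots, 2314, 1423, 1324$ and $\dots, 1423, 2314, 1324$ as shellings. First I would record the swap structure explicitly: $3412$ swaps to $2413$, $2413$ swaps to both $2314$ and $1423$, each of $2314$ and $1423$ swaps to $1324$, and (crucially) $2314$ and $1423$ are \emph{not} related by a swap, since $\inv$ differs by $0$ rather than $1$. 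Thus in the poset $J(\Z_4)$, which has the shape of a ``diamond'' of saturated chains, the chains corresponding to $2314$ and $1423$ are incomparable neighbors of the chain for $2413$.

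Next I would invoke the key rigidity property of EL-labelings: in an EL-shellable poset, the induced shelling order on maximal chains is \emph{lexicographic} with respect to the labeling $\lambda$, and in particular the first chain in any such order must be the unique $\lambda$-increasing maximal chain of the whole poset $\hat 0$ to $\hat 1$. Any EL-induced shelling of the order complex of $J(\Z_4)$ therefore has a prescribed \emph{first} element (the $\lambda$-increasing chain) and, more to the point, the relative order of any two maximal chains is determined by comparing their label sequences lexicographically — it depends only on $\lambda$, not on which chains happen to have been placed already. I would argue that this forces a consistency condition incompatible with our freedom to reorder $2314$ and $1423$: since these two chains pass through the common rank-$2$ node corresponding to the order ideal $I_2$ of $\Z_4$ and then diverge, their relative lex order is decided by the labels on the two outgoing cover relations from that node (together with the bottom two labels, which are shared), and hence is \emph{fixed} once $\lambda$ is chosen. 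So at most one of the two allowed orders $\{2314 \text{ before } 1423,\ 1423 \text{ before } 2314\}$ can come from a given EL-labeling. The final step is to check that, in fact, for \emph{every} EL-labeling of $J(\Z_4)$ the same one of these two orders is forced, so that the other one — which is still a valid shelling by Theorem~\ref{thm:shellingorder} — is never EL-induced; alternatively, and more cleanly, exhibit a shelling from Theorem~\ref{thm:shellingorder} that is not even lexicographic with respect to \emph{any} edge-labeling (for instance by making its restriction to a sub-interval fail to have a unique increasing chain), which immediately rules out an EL origin.

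The main obstacle is the last step: ruling out \emph{all} EL-labelings rather than one particular labeling. The cheap escape is the observation that an EL-labeling of a bounded graded poset gives, for the full poset, a shelling whose induced order is \emph{forced} to begin with the increasing chain and in which the order is globally determined by $\lambda$; so it suffices to produce two distinct shellings allowed by Theorem~\ref{thm:shellingorder} that \emph{disagree} on the relative order of a pair of maximal chains sharing a lower interval but whose bottom label sequences would have to coincide — then no single $\lambda$ can realize both, and since both are legitimate shellings, at least one of them is not EL-induced. Realizing this for $n = 4$ is a finite check; I would present it as such, listing $J(\Z_4)$'s Hasse diagram, the five maximal chains, and the observation that swapping the two inversion-$3$ permutations yields two shellings that cannot both factor through the same edge-labeling, combined with the fact that every EL-labeling induces a shelling whose chain order is a fixed lexicographic order. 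For general $n$, I would remark that the same phenomenon persists because $A_n$ always contains, for $n \geq 4$, at least two alternating permutations of equal inversion number lying ``just above'' a common order ideal, so Theorem~\ref{thm:shellingorder} always grants a genuine choice that EL-labelings cannot all accommodate.
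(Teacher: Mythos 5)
Your overall strategy is the same as the paper's: pick two alternating permutations of equal inversion number and with identical swap data (so that Theorem \ref{thm:shellingorder} allows either relative order), and argue that EL-labelings force one particular relative order. The problem is that the decisive step is exactly the one you do not carry out, and your ``cheap escape'' does not repair it: showing that \emph{no single} labeling $\lambda$ can realize both orders only proves that each fixed EL-labeling induces at most one of the two shellings. It does not follow that ``at least one of them is not EL-induced'': a priori one EL-labeling could induce the order with $2314$ first and a different EL-labeling the order with $1423$ first, in which case both shellings would be EL-obtainable and the proposition would not follow. This is a genuine quantifier gap, since the statement requires a shelling from Theorem \ref{thm:shellingorder} that is induced by \emph{no} EL-labeling. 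Worse, your fallback claim that ``for every EL-labeling of $J(\Z_4)$ the same one of these two orders is forced'' is false as stated: an EL-labeling whose unique increasing maximal chain is the chain of $1324$ (through $\{z_1\}$) makes the chain of $1423$ lexicographically precede that of $2314$, while a labeling whose increasing chain is the chain of $3412$ forces the opposite order. The ingredient you are missing, and which the paper uses, is that every order satisfying Theorem \ref{thm:shellingorder} must \emph{begin} with the unique inversion-maximizing permutation (Proposition \ref{prop:uniquemax}); hence any EL-labeling inducing such an order must have that permutation's chain as the unique increasing maximal chain, and this pins down the increasing chain of the relevant bottom interval, which in turn fixes the relative order of the two candidate chains uniformly over all admissible labelings. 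Only then can one conclude that the Theorem \ref{thm:shellingorder} order with the pair transposed comes from no EL-labeling.

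Two further inaccuracies would surface in your ``finite check.'' First, the chains of $2314$ and $1423$ in $J(\Z_4)$ do not share their bottom two edges: they already differ at rank one ($\{z_3\}$ versus $\{z_1\}$), meet at $\{z_1,z_3\}$, and diverge again, so the lexicographic comparison is decided beginning with the very first edge labels, not at a common rank-two node. Second, $n=4$ is actually the most delicate case: the chain of $3412$ passes through $\{z_3,z_4\}$ rather than $\{z_1,z_3\}$, so the increasing chain of the interval $[\emptyset,\{z_1,z_3\}]$ is not an initial segment of the first chain and a short additional case analysis on the labels is needed (the paper runs the clean argument for odd $n\geq 5$, where the first two steps of the maximal-inversion chain exhaust the relevant interval, and only remarks that $n=4$ can be adapted). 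Minor points that do not affect the structure: your inversion counts for $A_4$ are each off by one, and your ``swaps to'' arrows are reversed relative to the paper's convention, under which a swap increases the inversion number.
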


\begin{proof}
For the sake of readability, we discuss these shellings on the level of alternating permutations.
The ``position of $\sigma$ in a shelling order" is taken to mean the position of $\Delta^{\sigma}$ in that
shelling order for the canonical triangulation of $\O(\Z_n)$.
All of the shellings described in Theorem \ref{thm:shellingorder} begin with the unique
alternating permutation $\bar\sigma$ that maximizes inversion number over all alternating permutations;
this permutation exists by Proposition \ref{prop:uniquemax}.
Note that $\bar\sigma^{-1}(1)$ is $n-1$ or $n$, depending upon the parity of $n$.

We address the case where $n \geq 5$ is odd.
Then $\bar\sigma$ is of the form 
\[ 
\bar\sigma = (n-1) \ n \ (n-3) \ (n-2) \ \dots \ 4 \ 5 \ 2 \ 3 \ 1.
\]
Let $\lambda$ be an EL-labeling of the cover relations of $J(\Z_n)$ that induces
a shelling with $\bar\sigma$ is its first element. (Note that if no such EL-labeling exists,
the proposition holds trivially.)

The sets $\emptyset, \{p_n\}, \{p_{n-2}\}$ and $\{p_{n-2}, p_n \}$ are order ideals of $J(\Z_n)$
that comprise the interval $[\emptyset, \{p_{n-2}, p_n \}]$.  The chain corresponding to $\bar\sigma$ begins with
$\emptyset \lessdot \{ p_n \} \lessdot \{p_{n-2}, p_n \}$.
So this must must be the unique $\lambda$-increasing chain in the interval $[\emptyset, \{p_{n-2}, p_n \}]$.
As such, it lexicographically precedes the chain $\emptyset \lessdot \{ p_{n-2} \} \lessdot \{p_{n-2}, p_n \}$.
This implies that any permutation $\sigma$ with $\sigma^{-1}(1) = n$ and $\sigma^{-1}(2) = n-2$ will
precede any permutation $\tau$ with $\tau^{-1}(2) = n$ and $\tau^{-1}(1) = n-2$.

In particular, let $\sigma$ be obtained from $\bar\sigma$ by switching the positions of $3$ and $4$.
Let $\tau$ be obtained from $\bar\sigma$ by switching the positions of $1$ and $2$. 
Then in any EL-shelling, $\sigma$ will come before $\tau$.
However, $\sigma$ and $\tau$ have the same inversion number and have exactly one swap position,
so they are interchangeable in any order given by the conditions of Theorem \ref{thm:shellingorder}.

An analogous argument works when $n \geq 6$ is even, and can be adapted for the case when $n=4$.
\end{proof}

This proposition and proof can also be adapted to show 
that not all shellings arising from Theorem \ref{thm:shellingorder}
can be obtained from CL-labelings of $J(\Z_n)$.


\section{The Swap Statistic Via Rank Selection}\label{sec:rankselection}

An alternate proof of Theorem \ref{thm:ehrhartseries} relies 
heavily on the concepts of rank selection and flag $f$-vectors 
developed for general posets in Sections 3.13 and 3.15 of 
\cite{ec1}. We will focus our attention to the zig-zag poset, $\Z_n$. 
Denote by $J(\Z_n)$ the distributive lattice of order ideals in $\Z_n$ ordered by inclusion. 
Let $S = \{s_1, \dots, s_k\} \subset [0,n]$, where $[0,n] = \{0,\dots,n\}$. 
We always assume that $s_1 < s_2 < \ldots < s_k$.
Denote by $\alpha_n(S)$ the number of chains of order ideals 
$I_{1} \subsetneq \dots \subsetneq I_{k}$ in $J(\Z_n)$ such that $\#I_{j} = s_j$ for all $j$. Define
\[
\beta_n(S) = \sum_{T \subset S} (-1)^{\#(S - T)}\alpha_n(T).
\]
By the Principle of Inclusion-Exclusion, or equivalently, 
via M\"obius inversion on the Boolean lattice,
\[
\alpha_n(S) = \sum_{T \subset S} \beta_n(S).
\]

In Section 3.13 of \cite{ec1}, the function $\alpha_n : 2^{[0,n]} \rightarrow \Zbb$ 
is called the \emph{flag f-vector} of $\Z_n$ and $ \beta_n: 2^{[0,n]} \rightarrow \Zbb$ 
is called the \emph{flag h-vector} of $\Z_n$. For any poset $P$ of size $n$, 
let $\omega: P \rightarrow [n]$ be an order-preserving bijection that assigns 
a label to each element of $P$; in this case, $\omega$ is called a \emph{natural labeling}.
Then for any linear extension 
$\sigma: P \rightarrow [n]$, we may define a permutation of the labels 
by $\omega(\sigma^{-1}(1)),\dots,\omega(\sigma^{-1}(n))$. 
The \emph{Jordan-H\"older set} $\L(P,\omega)$ is the set of all 
permutations obtained in this way. The following result for arbitrary 
finite posets can be found in chapter 3.13 of \cite{ec1}.

\begin{theorem}[\cite{ec1}, Theorem 3.13.1]
Let $S \subset [n-1]$. Then $\beta_n(S)$ is equal to the number 
of permutations $\tau \in \L(P,\omega)$ with descent set $S$.
\end{theorem}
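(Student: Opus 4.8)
The plan is to prove the stronger identity $\alpha_n(S) = \#\{\tau \in \L(P,\omega) : D(\tau) \subseteq S\}$ for every $S \subseteq [n-1]$, where $D(\tau)$ denotes the descent set of $\tau$; the theorem then follows formally. Indeed, sorting the right-hand side by descent set gives $\alpha_n(S) = \sum_{T \subseteq S} c(T)$ with $c(T) := \#\{\tau \in \L(P,\omega) : D(\tau) = T\}$, and comparing with the relation $\alpha_n(S) = \sum_{T \subseteq S} \beta_n(T)$ recorded above — together with the uniqueness of the M\"obius transform on the Boolean lattice — yields $c(S) = \beta_n(S)$, which is exactly the claim. (Here $\alpha_n$ and $\beta_n$ are to be read for the general poset $P$, i.e.\ with chains of order ideals taken in $J(P)$.)

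To prove the identity, first use $\omega$ to identify $P$ with a poset on $[n]$ for which $a <_P b$ forces $a < b$; then $\L(P,\omega)$ is the set of permutations $\tau = \tau_1 \cdots \tau_n$ (one-line notation) such that the sequence $\tau_1, \tau_2, \dots, \tau_n$ is a linear extension of $P$. Fix $S = \{s_1 < \cdots < s_k\} \subseteq [n-1]$ and set $s_0 = 0$, $s_{k+1} = n$. The key step is to show that
\[
\tau \;\longmapsto\; \Big(\{\tau_1,\dots,\tau_{s_1}\} \subsetneq \{\tau_1,\dots,\tau_{s_2}\} \subsetneq \cdots \subsetneq \{\tau_1,\dots,\tau_{s_k}\}\Big)
\]
restricts to a bijection from $\{\tau \in \L(P,\omega) : D(\tau) \subseteq S\}$ onto the set of chains of order ideals $I_1 \subsetneq \cdots \subsetneq I_k$ in $J(P)$ with $\#I_j = s_j$ for all $j$, which is the set enumerated by $\alpha_n(S)$.

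I would verify this in the usual way. Any prefix $\{\tau_1,\dots,\tau_m\}$ of a linear extension is an order ideal of $P$, so the map is well defined and (since the $s_j$ strictly increase) lands in chains of the prescribed sizes. For the inverse, given such a chain put $I_0 = \emptyset$, $I_{k+1} = P$ and $B_j = I_j \setminus I_{j-1}$, and let $\hat\tau$ be the permutation obtained by listing the elements of $B_1$ in increasing order, then those of $B_2$, and so on. Then $\hat\tau$ is a linear extension — this is where the natural labeling is used: if $a <_P b$ then every order ideal containing $b$ contains $a$, so the block index of $a$ is at most that of $b$, and if the two indices coincide then $a < b$ as integers, so $a$ precedes $b$ in $\hat\tau$ either way — and the descents of $\hat\tau$ can occur only at the positions $s_1,\dots,s_k \in S$. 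Conversely, any $\tau$ in a given fiber with $D(\tau) \subseteq S$ must be increasing on each block of positions $(s_{j-1},s_j]$, hence equals $\hat\tau$; this gives bijectivity.

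The argument is short and I do not expect a serious obstacle; the one subtlety is the interplay between the descent condition and the natural labeling $\omega$. It is precisely $\omega$ that supplies a canonical increasing representative of each block and guarantees that this representative is order-preserving, so that imposing $D(\tau) \subseteq S$ singles out exactly one element per fiber. I would also be explicit that for $S \subseteq [n-1]$ the quantity $\alpha_n(S)$ counts chains of \emph{proper, nonempty} order ideals, so that adjoining $\emptyset$ and $P$ produces exactly the block decomposition used above — this is the only point at which the restriction $S \subseteq [n-1]$ (rather than $S \subseteq [0,n]$) enters.
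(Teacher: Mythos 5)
Your proof is correct. The paper does not prove this statement at all --- it is quoted from Stanley (\cite{ec1}, Theorem 3.13.1) --- and your argument (the prefix-chain bijection between chains of order ideals of sizes in $S$ and linear extensions whose label-permutation has descent set contained in $S$, followed by M\"obius inversion on the Boolean lattice) is essentially the standard proof given there, so there is nothing to flag.
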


The \emph{order polynomial} of a poset $P$, $\Omega_P(m)$ is the number of order preserving maps from $P$ to $[m]$.
The Ehrhart polynomial of $\O(\Z_n)$ evaluated at
$m$ is equal to the order polynomial of $\Z_n$ evaluated at $m+1$ \cite{stanley1986}. 
We also have the following equality of generating functions
from Theorem 3.15.8 of \cite{ec1}.
We restate the relevant special case of this theorem here.

\begin{theorem}[\cite{ec1}, Theorem 3.15.8]
Let $\omega: P \rightarrow [n]$ be an order-preserving bijection.
Then
\[
\sum_{m \geq 0} \Omega_P(m) x^m = \frac{\sum_{\sigma \in \L(P,\omega)} x^{1+\mathrm{des}(\sigma)}}{(1-x)^{p+1}},
\]
where $p$ is the cardinality of $P$.
\end{theorem}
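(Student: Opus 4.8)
The plan is to prove this classical identity by Stanley's theory of $(P,\omega)$-partitions. Recall that $\Omega_P(m)$ counts the order-preserving maps $f\colon P\to[m]$, i.e.\ those with $f(s)\le f(t)$ whenever $s\le t$ in $P$, and that $w(\sigma) := \omega(\sigma^{-1}(1))\,\omega(\sigma^{-1}(2))\cdots\omega(\sigma^{-1}(n))$ sets up a bijection between linear extensions $\sigma\colon P\to[n]$ of $P$ and the permutations in $\L(P,\omega)$. The backbone of the argument is to partition the set of all order-preserving maps according to linear extensions. For a linear extension $\sigma$ let $A_\sigma(m)$ be the set of maps $f\colon P\to[m]$ with
\[
f(\sigma^{-1}(1))\le f(\sigma^{-1}(2))\le\cdots\le f(\sigma^{-1}(n)),
\]
the inequality at step $i$ being required to be strict exactly when $i$ is a descent of $w(\sigma)$.

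First I would prove the \emph{fundamental lemma}: the order-preserving maps $f\colon P\to[m]$ are the disjoint union of the $A_\sigma(m)$ as $\sigma$ ranges over the linear extensions of $P$. Given an order-preserving $f$, sort the elements of $P$ by increasing $f$-value, breaking ties by putting the element with the smaller $\omega$-label first; since $\omega$ is a natural labeling and $f$ is order-preserving, for $s<_P t$ we have $\omega(s)<\omega(t)$ and $f(s)\le f(t)$, so $s$ precedes $t$ in this order, which is therefore a linear extension $\sigma$. By construction $f\in A_\sigma(m)$; and if $f\in A_{\sigma'}(m)$ for another linear extension $\sigma'$, then within each maximal run of equal $f$-values along $\sigma'$ the $\omega$-labels must increase (equalities of consecutive $f$-values occur only at non-descents of $w(\sigma')$), which is precisely the tie-breaking rule, so $\sigma'=\sigma$. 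Conversely any $f\in A_\sigma(m)$ is weakly increasing along a linear extension and hence order-preserving. This is the step where the hypotheses on $\omega$ and on $f$ genuinely interact, and I expect it to be the main obstacle; the remainder is bookkeeping.

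Next I would count $\lvert A_\sigma(m)\rvert$. Writing $a_i=f(\sigma^{-1}(i))$ and $d=\mathrm{des}(w(\sigma))$, the set $A_\sigma(m)$ is in bijection with the integer sequences $1\le a_1\le\cdots\le a_n\le m$ that are strict at the $d$ descent positions of $w(\sigma)$. The substitution $c_i = a_i - \#\{\,j\in\mathrm{Des}(w(\sigma)) : j<i\,\}$ is a bijection onto the arbitrary weakly increasing sequences $1\le c_1\le\cdots\le c_n\le m-d$, of which there are $\binom{m-d+n-1}{n}$. Summing over linear extensions (equivalently over $\L(P,\omega)$) gives
\[
\Omega_P(m)=\sum_{w\in\L(P,\omega)}\binom{m-\mathrm{des}(w)+n-1}{n}.
\]

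Finally I would pass to generating functions. For a fixed $d$, the substitution $m=k+d+1$ (the terms with $m\le d$ vanish) yields
\[
\sum_{m\ge 0}\binom{m-d+n-1}{n}x^m = x^{d+1}\sum_{k\ge 0}\binom{k+n}{n}x^k = \frac{x^{d+1}}{(1-x)^{n+1}},
\]
using the standard expansion $\sum_{k\ge 0}\binom{k+n}{n}x^k=(1-x)^{-(n+1)}$. Summing this over $w\in\L(P,\omega)$ and recalling $n=p=\#P$ produces
\[
\sum_{m\ge 0}\Omega_P(m)\,x^m=\frac{\sum_{w\in\L(P,\omega)}x^{1+\mathrm{des}(w)}}{(1-x)^{p+1}},
\]
which is the asserted identity.
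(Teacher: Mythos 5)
Your argument is correct: it is the standard proof of Stanley's Theorem 3.15.8 via the fundamental lemma of $(P,\omega)$-partitions (decompose the order-preserving maps over linear extensions using the sort-by-$f$, tie-break-by-$\omega$ rule, count each class as $\binom{m-\mathrm{des}+n-1}{n}$, and sum the resulting geometric-type series). The paper itself cites this result from \cite{ec1} without proof, and your write-up matches the approach of that source, including the correct placement of strict inequalities at descents and the valid vanishing of the binomial terms for $m\le \mathrm{des}(w)$.
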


Therefore, since $i_{\O(\Z_n)}(m) = \Omega_{\O(\Z_n)}(m+1)$, we have that
\[
\mathrm{Ehr}_{\O(\Z_n)}(t) = \frac{\sum_{\sigma \in \L(\O(\Z_n),\omega)} x^{\mathrm{des}(\sigma)}}{(1-x)^{n+1}}.
\]
It follows that the $h^*$-polynomial of $\O(\Z_n)$ is
\begin{equation}\label{eqn:hstarresult}
h^*_{\O(\Z_n)}(t) = \sum_{S \subset [n-1]} \beta_n(S) t^{\#S}.
\end{equation}

So, Theorem \ref{thm:ehrhartseries} will follow from 
Equation \ref{eqn:hstarresult} and the following theorem, 
which is analogous to Theorem 3.13.1 in \cite{ec1}.

\begin{theorem}\label{thm:betanumbers}
Let $S \subset [n-1]$. Then $\beta_n(S)$ is the number of 
alternating permutations $\omega$ with $\Swap(\omega) = S$.
\end{theorem}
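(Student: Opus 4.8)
The plan is to prove Theorem~\ref{thm:betanumbers} by establishing the stronger refined statement that, for each $S \subseteq [n-1]$, the integer $\alpha_n(S)$ counts exactly the alternating permutations $\omega$ with $\Swap(\omega) \supseteq S$; Theorem~\ref{thm:betanumbers} then follows immediately by inclusion–exclusion on the Boolean lattice, matching the definition $\beta_n(S) = \sum_{T \subseteq S}(-1)^{\#(S-T)}\alpha_n(T)$ against the identity $\#\{\omega : \Swap(\omega) \supseteq S\} = \sum_{T \supseteq S}\#\{\omega : \Swap(\omega) = T\}$. So the real content is a bijection, for each $S = \{s_1 < \dots < s_k\} \subseteq [n-1]$, between chains of order ideals $I_1 \subsetneq \dots \subsetneq I_k$ in $J(\Z_n)$ with $\#I_j = s_j$ and alternating permutations $\omega$ whose swap-set contains $S$.

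First I would set up the correspondence. A chain $I_1 \subsetneq \dots \subsetneq I_k$ with the prescribed cardinalities is the same data as a weakly-consistent chain of $0/1$-labelings, exactly the vertices $\bv^\omega_{n-s_1}, \dots, \bv^\omega_{n-s_k}$ of a simplex $\Delta^\omega$ (after reindexing by coordinate sum); equivalently, it records the ordered set partition of the ground set into the blocks $\{z_i : \sigma^{-1}(i) \text{ lies in } I_j \setminus I_{j-1}\}$. Among all alternating permutations compatible with this partition data, I want to single out one canonical representative. The key move is to reuse the machinery of Section~\ref{sec:shelling}: given the chain, the set $S' = \{\bs_0, \dots, \bs_r\}$ of corresponding vertices lies in $\vert(\omega)$ for at least one alternating $\omega$ (any completion of the partition to an alternating permutation), so Proposition~\ref{prop:uniquemax} produces a \emph{unique} alternating $\hat\sigma$ of maximal inversion number with $S' \subseteq \vert(\hat\sigma)$. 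I would define the bijection to send the chain to this $\hat\sigma$. Conversely, given an alternating permutation $\omega$ with $\Swap(\omega) \supseteq S$, I would send it to the chain of order ideals $I_j = \{\omega^{-1}(1), \dots, \omega^{-1}(s_j)\}$.

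The two things to verify are that these maps are mutually inverse and that the image really consists of permutations whose swap-set contains $S$. For the forward direction, I need: if $\hat\sigma$ is the max-inversion completion of the chain data, then each $s_j \in \Swap(\hat\sigma)$. This is precisely the type of statement proved inside Proposition~\ref{prop:maxinversion} and Lemma~\ref{lem:swapbetweennoninv}: the fact that $\hat\sigma$ maximizes inversions within each block of the partition forces, at each cardinality $s_j$ where the chain "stops," the existence of a relevant non-inversion straddling value $s_j$, and Lemma~\ref{lem:swapbetweennoninv} then yields a swap at some $k$ with $\dots \le k < \dots$; one has to check the arithmetic pins this down to $k = s_j$ exactly, using that $\hat\sigma$ restricted to the block is the reverse-then-fix-ascents permutation. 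For the reverse direction, starting from $\omega$ with $\Swap(\omega) \supseteq S$, I take its chain, then its max-inversion completion $\hat\sigma$, and must show $\hat\sigma = \omega$ is \emph{not} generally true — rather, I should show the chain map is injective on the image of the completion map and surjective onto all chains, which is a counting/uniqueness argument: distinct chains give distinct $S'$, hence (by uniqueness in Proposition~\ref{prop:uniquemax}) distinct $\hat\sigma$, and every chain arises since any chain's completion exists.

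I expect the main obstacle to be the forward direction's precise claim that the swaps of the canonical completion $\hat\sigma$ \emph{located at the chain's stopping cardinalities} are exactly $\{s_1, \dots, s_k\}$ and not a proper superset — in other words, that the completion is chosen to have \emph{no more} relevant non-inversions (equivalently swaps at those positions) than forced. Handling "exactly $S$" versus "$\supseteq S$" is why I route everything through $\alpha_n$ rather than $\beta_n$ directly: proving the clean statement "$\alpha_n(S) = \#\{\omega : \Swap(\omega) \supseteq S\}$" sidesteps the need to control the swap-set exactly at the bijection level and defers all the "exactly" bookkeeping to the single, routine Möbius inversion step. The second-order difficulty is cleanly translating the order-ideal/chain language of Section~\ref{sec:rankselection} into the $0/1$-vertex language of Section~\ref{sec:shelling} so that Propositions~\ref{prop:maxinversion} and~\ref{prop:uniquemax} apply verbatim; I would spell out this dictionary carefully at the start, since the reindexing $s_j \leftrightarrow n - s_j$ between "cardinality of order ideal" and "coordinate sum of vertex" is an easy place to introduce sign errors.
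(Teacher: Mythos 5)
There is a genuine error: your containment is backwards, and the refined statement you propose to prove is false. The statement that matches the definition of $\beta_n$ (and the one the paper actually proves) is $\alpha_n(S) = \#\{\omega \in A_n : \Swap(\omega) \subseteq S\}$, not $\supseteq S$. Your version already fails for $n=3$: $\alpha_3(\emptyset)=1$ (the empty chain) while both alternating permutations $132$ and $231$ trivially satisfy $\Swap(\omega) \supseteq \emptyset$; likewise $\alpha_3(\{2\})=1$ but no alternating permutation of length $3$ has $2$ in its swap set. Consequently the ``forward direction'' you flag as the main obstacle --- that every $s_j$ lies in $\Swap(\hat\sigma)$ for the max-inversion completion $\hat\sigma$ --- is not just hard, it is false: in the paper's own example with $S=\{3,6\}$ the completion is $\hat\sigma = 3726451$ with $\Swap(\hat\sigma)=\{3\} \subsetneq S$. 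Intuitively, maximizing inversions minimizes relevant non-inversions, and by Lemma \ref{lem:swapbetweennoninv} swaps are witnesses of relevant non-inversions, so the canonical completion has \emph{few} swaps (contained in $S$), never forcibly many. The M\"obius step also does not go through with your orientation: $\beta_n(S) = \sum_{T \subseteq S}(-1)^{\#(S-T)}\alpha_n(T)$ sums over \emph{subsets}, which inverts $\alpha_n(S) = \sum_{T\subseteq S}\#\{\omega : \Swap(\omega)=T\}$ and hence requires $\alpha_n(T) = \#\{\omega : \Swap(\omega)\subseteq T\}$; the identity you cite, $\#\{\Swap(\omega)\supseteq S\} = \sum_{T\supseteq S}\#\{\Swap(\omega)=T\}$, would have to be inverted over supersets and does not produce $\beta_n$.

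Even after flipping the containment (which then coincides with the paper's route via $\phi_S$ and $\psi_S$), your argument for bijectivity is incomplete. Showing that distinct chains give distinct vertex sets and hence, by Proposition \ref{prop:uniquemax}, distinct completions only shows the completion map is injective on chains; it identifies $\alpha_n(S)$ with the size of its image, not with $\#\{\omega : \Swap(\omega)\subseteq S\}$. You still need (a) that the completion of a chain has swap set contained in $S$, and (b) that no \emph{other} permutation with swap set contained in $S$ produces the same chain under the ideal-recovery map $\psi_S$. Point (b) is the real content of the paper's proof: if $\sigma \neq \phi_S(I_1,\dots,I_k)$ yields the same chain, then $\sigma$ has an extra relevant non-inversion inside some block $I_j \setminus I_{j-1}$, and Lemma \ref{lem:swapbetweennoninv} applied within that block produces a swap of $\sigma$ strictly between $s_{j-1}$ and $s_j$, hence outside $S$ --- contradicting $\Swap(\sigma)\subseteq S$. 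Your proposal never supplies this step, and with the reversed containment it could not.
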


To prove this theorem, for every $S = \{s_1, \dots, s_n\} \subset [n-1]$, 
we will find define a function $\phi_S$ that maps chains of order ideals of 
sizes $s_1, \dots, s_k$ to alternating permutations whose swap set is 
contained in $S$. Let $I_1, \dots, I_k$ be a chain of order ideals in 
$J(\Z_n)$ with sizes $\#I_j = s_j$. Let $\bw_i$ be the vertex of $\O(\Z_n)$
that satisfies
\[
\bw_i(j) = \begin{cases}
0 & \text{ if } j \in I_i \\
1 & \text{ if } j \not\in I_i.
\end{cases}
\]
Define $\phi_S(I_1, \dots, I_k)$ to be the unique alternating permutation
that maximizes inversion number over all alternating permutations whose vertex
set contains $\{\bw_1, \dots, \bw_k\}$. This map is well-defined by Proposition \ref{prop:uniquemax}.

Let $\psi_S$ be the map that sends an alternating permutation 
$\omega$ with $\Swap(\omega) \subset S$ to the chain of order ideals 
$(I_1, \dots,I_k)$ where each $I_j = \{\omega^{-1}(1), \dots, \omega^{-1}(s_j)\}$.  
Since every alternating permutation $\omega$ is a linear extension of $\Z_n$, 
each $I_j$ obtained in this way is an order ideal. They form a chain by construction, 
so the map $\psi_S$ is well-defined.
We will show that $\psi_S$  is the inverse of $\phi_S$ in the proof of Theorem \ref{thm:betanumbers}.

\begin{example}
Consider the zig-zag poset on seven elements $\Z_7$ pictured in Figure \ref{fig:Z7}. 
Let $S = \{3, 6\}$, and let $I_1 = \{a, c, g\}$ and $I_2 = \{a, c, d, e, f ,g\}$ be the given 
order ideals of sizes 3 and 6 respectively. Then the vectors $\bw_1$ and $\bw_2$ are
\[
\bw_1 = \begin{bmatrix}
0 \\ 1 \\ 0 \\ 1 \\ 1 \\ 1 \\ 0
\end{bmatrix} \qquad \text{ and } \qquad
\bw_2 = \begin{bmatrix}
0 \\ 1 \\ 0 \\ 0 \\ 0 \\ 0 \\ 0
\end{bmatrix}.
\]

Notice that these are the same vectors $\bw_1$ and $\bw_2$ as in Example \ref{ex:uniquemax}.
So the unique alternating permutation $\phi_S(I_1, I_2)$ that maximizes
inversion number over all alternating permutations whose vertex set contains
$\{ \bw_1, \bw_2 \}$ is the same permutation as in Example \ref{ex:uniquemax},
\[
\phi_S(I_1, I_2) = 3 \ 7 \ 2 \ 6 \ 4 \ 5 \ 1.
\]
Note that $\Swap(3726451)  = \{3\}  \subset \{3,6\} = S.$  

Now let $\omega = 3726451$. We will recover our original order ideals $I_1$ and $I_2$ by finding $\psi_S(\omega)$. For clarity, we will treat $\omega$ as a map from $\{a,\dots,g\}$ to $\{1, \dots,7\}$. The first order ideal of $\psi_S(\omega)$ consists of the inverse images of $1, 2$, and $3$ in $\omega$. That is,
\[
I_1 = \{ \omega^{-1}(1), \omega^{-1}(2), \omega^{-1}(3)\} = \{a, c, g\}.
\]
The second order ideal of $\psi_S(\omega)$ consists of the inverse images of $1$ through $6$ in $\omega$. So we obtain
\[
I_2 = \{\omega^{-1}(1), \dots, \omega^{-1}(6)\} = \{a, c, d, e, f, g\}.
\]
Note that this is, in fact, the chain of order ideals with which we began.
\end{example}

\begin{figure}
\centering
\begin{tikzpicture}[scale=.5]
\draw[fill] (0,0) circle [radius = .1];
\node[below] at (0,0) {$a$};
\draw[fill] (1,1) circle [radius = .1];
\node[above] at (1,1) {$b$};
\draw[fill] (2,0) circle [radius = .1];
\node[below] at (2,0) {$c$};
\draw[fill] (3,1) circle [radius = .1];
\node[above] at (3,1) {$d$};
\draw[fill] (4,0) circle [radius = .1];
\node[below] at (4,0) {$e$};
\draw[fill] (5,1) circle [radius = .1];
\node[above] at (5,1) {$f$};
\draw[fill] (6,0) circle [radius = .1];
\node[below] at (6,0) {$g$};
\draw (0,0) -- (1,1) -- (2,0) -- (3,1) -- (4,0) -- (5,1) -- (6,0);
\end{tikzpicture}
\caption{The zig-zag poset $\Z_7$} \label{fig:Z7}
\end{figure}
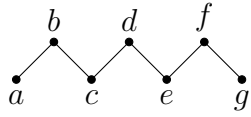

\begin{proof}[Proof of Theorem \ref{thm:betanumbers}]
Let $S = \{s_1, \dots, s_k\} \subset [n-1]$. 
We will show that $\alpha_n(S)$ is the number of 
alternating permutations whose swap set is contained in 
$S$ by showing that the map $\phi_S$ described above is a bijection.

Let $I_1, \dots, I_k$ be a chain of order ideals in $J(\Z_n)$ with sizes $\#I_j = s_j$. 
It is clear from the definitions of $\phi_S$ and $\psi_S$ that 
\[
\psi_S(\phi_S(I_1,\dots,I_k)) = (I_1,\dots,I_k).\]
 Since $\phi_S$ is injective, it suffices to show that 
 $\psi_S$ is also injective. We will show that 
 $\phi_S(I_1,\dots,I_k)$ is the only alternating permutation that maps to 
 $(I_1, \dots, I_k)$ under $\psi_S$. 

Since $\omega = \phi_S(I_1,\dots,I_k)$ is the unique alternating permuation
that maximizes inversion number over all alternating permutations with
$\{\bw_1, \dots, \bw_k\}$ in their vertex sets, any other alternating permutation
$\sigma$ that maps to $(I_1, \dots, I_k)$ under $\psi_S$ must have fewer
inversions than $\omega$.

Let $\sigma$ be such a permutation. Since each inversion between the sets $I_1$,
$\Z_n - I_k$ and $I_j - I_{j-1}$ for all $1 < j \leq k$ are fixed, the additional
non-inversion must be contained in one of these sets. Without loss of generality,
let this be $R = I_j - I_{j-1}$. Denote by $\sigma|_R$ the restriction of $\sigma$
to the domain $R$.
Let $(\sigma^{-1}(a), \sigma^{-1}(b))$ be the non-inversion of 
$\sigma|_R$ that is \emph{not} required by the alternating structure. 
Then by Lemma \ref{lem:swapbetweennoninv}, there exists a $k$ such that
$a \leq k < b$ and $k$ is a swap in $\sigma$. Since $a \leq k < b$,
$\sigma^{-1}(k)$ and $\sigma^{-1}(k+1)$ are in $R$, 
so $k$ is also a swap in $\sigma|_R$ as well. 
So the swap set of $\sigma$ is not contained in $S$ and we have reached a contradiction.

Therefore, $\omega$ is the only alternating permutation that can map to 
$(I_1, \dots, I_k)$ under $\psi_S$, and $\psi_S$ is the inverse map of 
$\phi_S$. So $\alpha_n(S)$ is equal to the number of alternating permutations 
whose swap set is contained in $S$. By the Principle of Inclusion-Exclusion, 
$\beta_n(S)$ is the number of alternating permutations whose swap set is equal to $S$.
\end{proof}

Theorem \ref{thm:ehrhartseries} follows as a corollary of Theorem \ref{thm:betanumbers}.

\begin{proof}[Proof of Theorem \ref{thm:ehrhartseries}]
Equation \ref{eqn:hstarresult} states that
\[
h^*_{\O(\Z_n)}(t) = \sum_{S \subset [n-1]} \beta_n(S) t^{\#S}.
\]
Theorem \ref{thm:betanumbers} tells us that $\beta_n(S)$ is the number of alternating permutations with swap set $S$. So the sum $\sum_{\#S = k} \beta_n(S)$ is the number of alternating permutations $\sigma$ with $\swap(\sigma) = k$. So
\[
h^*_{\O(\Z_n)}(t) = \sum_{\sigma} t^{\text{swap}(\sigma)},
\]
as needed.
\end{proof}

We conclude this section with an equidistribution result that follows as a corollary of Theorem \ref{thm:ehrhartseries}.

\begin{cor}
Let $\omega$ be a natural labeling of $\Z_n$. Then
\[
\sum_{\sigma \in A_n} t^{\swap(\sigma)} = \sum_{\sigma \in \mathcal{L}(\Z_n, \omega)} t^{\mathrm{des}(\sigma)}.
\]
\end{cor}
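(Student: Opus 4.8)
The plan is to deduce this corollary by comparing the two expressions for the $h^*$-polynomial of $\O(\Z_n)$ that have already been established in the paper. On the one hand, Theorem \ref{thm:ehrhartseries} states that
\[
h^*_{\O(\Z_n)}(t) = \sum_{\sigma \in A_n} t^{\swap(\sigma)}.
\]
On the other hand, Equation \ref{eqn:hstarresult}, which is derived from Theorem 3.15.8 of \cite{ec1} together with the fact that $i_{\O(\Z_n)}(m) = \Omega_{\Z_n}(m+1)$, gives
\[
h^*_{\O(\Z_n)}(t) = \sum_{S \subset [n-1]} \beta_n(S) t^{\#S} = \sum_{\sigma \in \mathcal{L}(\Z_n,\omega)} t^{\mathrm{des}(\sigma)},
\]
where the second equality is precisely Theorem 3.13.1 of \cite{ec1} applied to $P = \Z_n$ (rewriting the $\beta_n(S)$ sum as a sum over permutations in the Jordan-H\"older set grouped by descent set). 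Setting the two right-hand sides equal yields the claimed identity.

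First I would recall that the displayed generating-function formula in Subsection \ref{subsec:ehrhart} (from Theorem 3.15.8 of \cite{ec1}) shows $h^*_{\O(\Z_n)}(t) = \sum_{\sigma \in \mathcal{L}(\Z_n,\omega)} t^{\mathrm{des}(\sigma)}$ for \emph{any} natural labeling $\omega$ — in particular the right-hand side is independent of the choice of $\omega$, which is why the corollary can be stated for an arbitrary natural labeling. Then I would invoke Theorem \ref{thm:ehrhartseries} for the left-hand side. Since both sums compute the same polynomial $h^*_{\O(\Z_n)}(t)$, they are equal coefficient by coefficient, which is the assertion of the corollary.

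There is essentially no obstacle here: the corollary is a formal consequence of two results already in hand, and the proof is a two-line identification of generating functions. The only point worth a sentence of care is noting that $\mathcal{L}(\Z_n,\omega)$ and hence $\sum_{\sigma \in \mathcal{L}(\Z_n,\omega)} t^{\mathrm{des}(\sigma)}$ does not depend on the particular natural labeling $\omega$ chosen — this follows because the left-hand side $\sum_{\sigma \in A_n} t^{\swap(\sigma)}$ manifestly does not, so the equality forces independence of $\omega$ on the right. I would write the proof as: by Theorem \ref{thm:ehrhartseries} and Equation \ref{eqn:hstarresult} together with Theorem 3.13.1 of \cite{ec1}, both sides equal $h^*_{\O(\Z_n)}(t)$, hence they are equal.
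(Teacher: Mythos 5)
Your proposal is correct and matches the paper's route exactly: the corollary is obtained by equating Theorem \ref{thm:ehrhartseries} with the descent-generating-function expression for $h^*_{\O(\Z_n)}(t)$ coming from Equation \ref{eqn:hstarresult} and Theorem 3.13.1 of \cite{ec1}, valid for any natural labeling $\omega$. Nothing further is needed.
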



\section{Combinatorial Properties of Swap Numbers}\label{sec:combinatorics}

Let $s_n(k)$ denote the number of alternating permutations on $n$ letters
such that have exactly $k$ swaps.  We call these numbers the \emph{swap numbers}.  
Theorem \ref{thm:ehrhartseries}
shows that the $h^*$-polynomial of $\O(\Z_n)$ is
\[
\sum_{k = 0}^{n-1} s_n(k) t^k.
\]
We are interested in understanding these numbers.  For example, it would be interesting
to find an explicit formula for $s_n(k)$, though we have not been able to do this yet.

One straightforward property that becomes apparent looking at examples is that
$s_n(n-1) = 0$.
This is clear because it is not possible that every $k \in [n-1]$
is a swap.  Indeed, otherwise $k$ is to the left of $k+1$ for all $k \in [n-1]$
which implies that $\sigma$ is the identity permutation, which is not alternating.
Furthermore, $s_n(n-2) = 1$, since the unique alternating permutation with this many swaps
is the one with $1, 2, \ldots, \lceil \tfrac{n}{2} \rceil$ in order in the odd numbered positions
and $\lceil \tfrac{n}{2} \rceil + 1, \ldots, n$ in order in the even numbered positions.
Similarly, $s_n(0) = 1$, because there is a unique alternating permutation with
no swaps.  It is the permutation $(n-1, n, n-3, n-2, n-5, n-4, \ldots)$.  

Another property that is apparent from examples is summarized in the
following:

\begin{theorem}\label{thm:sym}
The sequence $s_n(0), s_n(1), \ldots, s_n(n-2)$ is
symmetric and unimodal.  
\end{theorem}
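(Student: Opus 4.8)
The statement asserts that the swap numbers $s_n(0), s_n(1), \ldots, s_n(n-2)$ form a symmetric, unimodal sequence. Since Theorem~\ref{thm:ehrhartseries} identifies $\sum_k s_n(k) t^k$ with $h^*_{\O(\Z_n)}(t)$, the cleanest route is to deduce both properties from general facts about $h^*$-polynomials of order polytopes. For \textbf{symmetry}, the order polytope $\O(\Z_n)$ is a Gorenstein polytope: this is a classical result of Stanley and Hibi, which holds precisely because the zig-zag poset has the property that its order polytope admits a combinatorial symmetry (the map $v \mapsto \mathds{1} - v$ composed with the poset automorphism that reverses $\Z_n$, or more directly the fact that $\Z_n$ is ``pure'' in the relevant sense — every maximal chain has the same length). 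Concretely, one can invoke Hibi's theorem that $\O(P)$ is Gorenstein if and only if $P$ is pure (all maximal chains have the same length), which $\Z_n$ satisfies trivially since it has rank $1$. Gorenstein-ness forces $h^*(t) = t^{\deg h^*} h^*(1/t)$, i.e.\ the coefficient sequence is palindromic. Since $h^*$ has degree $n-2$ (as $s_n(n-1)=0$ and $s_n(n-2)=1$, both already established in the text), the sequence $s_n(0), \ldots, s_n(n-2)$ is symmetric. Alternatively, symmetry can be proven bijectively: exhibit an involution on $A_n$ sending $\swap(\sigma)$ to $(n-2) - \swap(\sigma)$; the natural candidate is $\sigma \mapsto \sigma'$ where $\sigma'(i) = n+1 - \sigma(n+1-i)$ when $n$ is even (and a parity-adjusted version for $n$ odd), and one checks this swaps the swap-set with its complement inside a suitable ground set — this is more elementary and self-contained, and I would present it as the main argument, with the Gorenstein remark as a one-line alternative.

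\textbf{Unimodality} is the harder half. The standard tool is: if a lattice polytope $P$ has a regular, unimodular triangulation (equivalently, if its $h^*$-polynomial equals the $h$-polynomial of a shellable simplicial complex — which we have, via Section~\ref{sec:shelling}), and if $P$ is Gorenstein, then one would \emph{like} to conclude unimodality, but this is not automatic — Gorenstein plus a unimodular triangulation does not in general give unimodality. What does suffice is the stronger statement that $\O(\Z_n)$ has a \emph{regular, unimodular, flag} triangulation: by a theorem of Bruns–Römer, a Gorenstein polytope with a regular unimodular flag triangulation has a unimodal (indeed, $\gamma$-nonnegative) $h^*$-vector. The canonical triangulation of an order polytope \emph{is} flag — its minimal non-faces are the pairs of incomparable order ideals (equivalently, pairs of $0/1$ vectors neither dominating the other), which is a classical fact about $\O(P)$ for any poset $P$. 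It is also regular (it is the ``staircase'' or pulling triangulation, well-known to be regular). So the plan is: (1) cite or verify flagness of the canonical triangulation of $\O(\Z_n)$; (2) cite regularity; (3) invoke the Bruns–Römer theorem to get unimodality from these together with the Gorenstein property established above.

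\textbf{Main obstacle.} The delicate point is marshalling the right off-the-shelf theorem for unimodality and making sure its hypotheses are met: Gorenstein alone is insufficient, and one genuinely needs the flag (and regular) structure of the specific triangulation. I would want to double-check that the paper is comfortable citing Bruns–Römer (or Athanasiadis's related results on $\gamma$-nonnegativity for flag Gorenstein complexes); if the authors prefer to stay self-contained, an alternative is to prove unimodality directly by exhibiting an \emph{explicit} order-raising injection $s_n(k) \hookrightarrow s_n(k+1)$ for $k < \lfloor (n-2)/2 \rfloor$ on the set of alternating permutations, matched with the symmetry involution — but constructing such an injection combinatorially on alternating permutations, respecting the swap statistic, looks technically involved and is exactly the kind of thing I would expect to consume most of the proof. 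My recommendation is the cite-the-theorem route: establish Gorenstein-ness (via Hibi's purity criterion, with a short proof), note flagness and regularity of the canonical triangulation (both standard, with references to \cite{stanley1986}), and then apply the Bruns–Römer unimodality theorem.
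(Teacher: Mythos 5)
Your overall frame --- identify $\sum_k s_n(k)t^k$ with $h^*_{\O(\Z_n)}(t)$, establish that $\O(\Z_n)$ is Gorenstein and has a regular unimodular triangulation, and then quote an off-the-shelf theorem --- is exactly the paper's route (the paper verifies the Gorenstein property directly, exhibiting the unique interior lattice point of $3\O(\Z_n)$ at lattice distance $1$ from every facet, so index $3$; regularity of the canonical triangulation comes from the Hibi straightening law \cite{Hibi1987} and the fact that initial complexes of toric ideals give regular triangulations \cite{sturmfels1996}; and then Theorem \ref{thm:bruns}, which is Theorem 1 of \cite{bruns2007}, yields both symmetry and unimodality). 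However, the argument you say you would present as the \emph{main} proof of symmetry does not work. The reverse--complement map $\sigma \mapsto \sigma'$, $\sigma'(i) = n+1-\sigma(n+1-i)$, does not send $\swap(\sigma)$ to $(n-2)-\swap(\sigma)$: already for $n=4$ it fixes both $1324$ and $3412$, yet $\swap(1324)=2$ and $\swap(3412)=0$, so neither fixed point has the self-complementary value $1$. Producing \emph{any} explicit bijection between $A_{n,k}$ and $A_{n,n-k-2}$ is in fact posed as an open problem in Section \ref{sec:combinatorics}, so "one checks this swaps the swap-set with its complement" is a genuine gap, not a routine verification. Your fallback for symmetry (Gorenstein-ness via Hibi's purity criterion, noting $\Z_n$ is pure of rank $1$, together with $s_n(n-1)=0$, $s_n(n-2)=1$ to pin the degree) is sound and is a legitimate alternative to the paper's hands-on index-$3$ computation.

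On unimodality, your citation is off in a way that matters for the write-up though not for the mathematics. The theorem of Bruns--R\"omer that the paper uses (Theorem \ref{thm:bruns}) already gives unimodality for a Gorenstein polytope admitting a regular unimodular triangulation; no flagness hypothesis is needed, contrary to your claim that "Gorenstein plus a [regular] unimodular triangulation does not in general give unimodality." Conversely, the stronger statement you attribute to Bruns--R\"omer --- that a Gorenstein polytope with a regular unimodular \emph{flag} triangulation has a $\gamma$-nonnegative $h^*$-vector --- is not their theorem; in that generality it is essentially Gal's conjecture and remains open. Since the canonical triangulation of $\O(\Z_n)$ is indeed flag, your added hypothesis is satisfied and harmless, but the conclusion you invoke is unavailable as stated. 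The fix is simply to drop the flag/$\gamma$ detour and cite the actual result: Gorenstein (index $3$) plus the regular unimodular canonical triangulation gives symmetry and unimodality at once, which is precisely the paper's proof.
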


In fact, Theorem \ref{thm:sym} and all the preceding properties will
follow from the fact that $\O(\Z_n)$ is a Gorenstein polytope of index $3$.

\begin{definition}
An integral polytope is \emph{Gorenstein} if there is a positive integer
$m$ such that $mP$ contains exactly one lattice point $v$ in its relative interior,
and for each facet-defining inequality $a^Tx \leq b$, we have that
$b - a^Tv = 1$.  
The integer $m$ is called the \emph{index} of $P$.   
\end{definition}

See Lemma 4 (iii) in \cite{bruns2007} for this characterization of 
Gorenstein polytopes.  
The following relevant theorem concerning the $h^*$ polynomials of 
Gorenstein polytopes with unimodular triangulations is Theorem 1 
in \cite{bruns2007}.

\begin{theorem}\label{thm:bruns}
Suppose that $P$ is a  Gorenstein polytope of dimension $d$ and index $m$.
Then  $h^*_P(t)$ is a polynomial of degree $d-m+1$, whose coefficients form
a symmetric sequence.  Furthermore, the constant term of $h^*_P(t)$ is $1$. 
If, in addition, $P$ has a regular unimodular triangulation, then the 
coefficient sequence is unimodal.
\end{theorem}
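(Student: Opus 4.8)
The plan is to translate the statement into commutative algebra through the Ehrhart ring of $P$. Embedding $P$ at height one, let $C = \mathbb{R}_{\geq 0}(P \times \{1\}) \subset \mathbb{R}^{d+1}$ be the cone over $P$, and let $A(P) = k[C \cap \mathbb{Z}^{d+1}]$ be its affine semigroup ring, graded by height. Since $C \cap \mathbb{Z}^{d+1}$ is a normal (saturated) affine semigroup, $A(P)$ is a normal, hence Cohen--Macaulay, graded ring of Krull dimension $d+1$, and its Hilbert series is $\mathrm{Ehr}_P(t) = h^*_P(t)/(1-t)^{d+1}$; so $h^*_P(t)$ is the $h$-polynomial of the Cohen--Macaulay ring $A(P)$. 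I would obtain the assertions about degree, symmetry, and constant term from the canonical module of $A(P)$, and the assertion about unimodality by degenerating $A(P)$ to a Stanley--Reisner ring.

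For the first three assertions: by the Danilov--Stanley description, the canonical module $\omega_{A(P)}$ is the ideal of $A(P)$ spanned by the lattice points in the relative interior of $C$, so its Hilbert series is $\sum_{k \geq 1} \#\big((kP)^{\circ} \cap \mathbb{Z}^d\big)\,t^k$. The Gorenstein condition on $P$, in the form of Lemma~4(iii) of \cite{bruns2007} (the interior point $v$ of $mP$ has slack $1$ in every facet inequality), is exactly the condition that this interior semigroup is principal, generated by $(v,m)$; hence $\omega_{A(P)} \cong A(P)(-m)$, so $A(P)$ is Gorenstein. Now I compare two formulas for $\mathrm{Hilb}_{\omega_{A(P)}}(t)$: on one hand it equals $t^m\, h^*_P(t)/(1-t)^{d+1}$, and on the other, graded local duality for the Cohen--Macaulay ring $A(P)$ gives $\mathrm{Hilb}_{\omega_{A(P)}}(t) = (-1)^{d+1}\mathrm{Hilb}_{A(P)}(1/t) = t^{\,d+1-s}\,\widetilde{h}(t)/(1-t)^{d+1}$, where $s = \deg h^*_P$ and $\widetilde{h}(t) = t^s h^*_P(1/t)$ is the coefficient-reversed polynomial. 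Since $h^*_P$ and $\widetilde{h}$ both have nonzero constant term, matching lowest-degree terms of these two expressions forces $m = d+1-s$, i.e. $\deg h^*_P = d-m+1$, and then $h^*_P(t) = \widetilde{h}(t)$, i.e. the coefficient sequence is symmetric. The constant term $h^*_0 = 1$ holds for any lattice polytope (it equals $i_P(0)$), and symmetry then gives that the leading coefficient is $1$ as well.

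For unimodality, assume in addition a regular unimodular triangulation $\Delta$ of $P$. The first step is a Gröbner degeneration: for a term order inducing $\Delta$, the defining toric ideal of $A(P)$ has initial ideal equal to the Stanley--Reisner ideal of $\Delta$, so $h^*_P$ equals the $h$-vector $h(\Delta)$ of the simplicial complex $\Delta$. Since $\Delta$ is regular it is shellable, so $k[\Delta]$ is Cohen--Macaulay and $h(\Delta)$ is an $M$-sequence, but that alone does not give unimodality. To upgrade it one uses the Gorenstein structure again: choose $\Delta$ to be a \emph{special} regular unimodular triangulation, one containing the ``special simplex'' determined by the lattice point $(v,m)$, and contract $\Delta$ along this simplex to exhibit $h^*_P$ as the $h$-vector of a simplicial $(d-m+1)$-polytope; unimodality (and a second proof of symmetry) then follow from the Dehn--Sommerville relations together with the necessity direction of the $g$-theorem. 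The main obstacle is precisely this last construction: the Hilbert-series bookkeeping in the previous paragraph is formal, but showing that the Gorenstein hypothesis permits a triangulation whose contraction along the special simplex is genuinely the boundary complex of a simplicial polytope --- rather than merely a shellable ball --- is where the real content lies. (Alternatively one could invoke the now-established $g$-conjecture for arbitrary simplicial spheres and sidestep polytopality, but the polytopal route keeps the argument within Stanley's classical framework.)
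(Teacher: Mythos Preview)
The paper does not prove this theorem; it is quoted as Theorem~1 of \cite{bruns2007} and then invoked as a black box in the proof of Theorem~\ref{thm:sym}. So there is no ``paper's own proof'' to compare against.

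Your outline for the degree, symmetry, and constant-term assertions is the standard and correct argument: the Gorenstein hypothesis on $P$ is exactly the condition that the canonical module of the Ehrhart ring is free of rank one generated in degree $m$, and the functional equation for the Hilbert series of a graded Gorenstein algebra then forces both $\deg h^*_P = d-m+1$ and palindromicity. This is the same route Bruns--R\"omer take.

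For unimodality your plan is also the Bruns--R\"omer strategy, and you have correctly located where the real work sits. Two points deserve sharpening. First, the ``special simplex'' is not literally determined by the interior point $(v,m)$; it is an $(m-1)$-simplex spanned by $m$ vertices of $P$ meeting every facet of $P$ in exactly $m-1$ of them (Athanasiadis' definition), and one must prove such a simplex exists---Bruns--R\"omer do this under the integral-closedness that follows from the unimodular-triangulation hypothesis. Second, and more seriously, the hypothesis only gives you \emph{some} regular unimodular triangulation, not one containing the special simplex. A nontrivial step in \cite{bruns2007} is to show that the existence of one regular unimodular triangulation can be upgraded to the existence of a regular unimodular triangulation that restricts to a triangulation of the boundary and is compatible with the special simplex, so that the link of the special simplex is the boundary complex of a simplicial polytope and the $g$-theorem applies. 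You flag this as ``the main obstacle,'' which is accurate, but as written the proposal stops short of the actual argument; what remains is essentially the content of the cited paper.
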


\begin{proof}[Proof of Theorem \ref{thm:sym}]
It suffices to show that $\O(\Z_n)$ is a  Gorenstein polytope of index three with
a regular unimodular triangulation.
The canonical triangulation of $\O(\Z_n)$ is a regular unimodular triangulation. 
This follows from the fact the triangulation is the initial complex
of a Gr\"obner basis of the toric ideal associated to $\O(\Z_n)$.  
Indeed, this Gr\"obner basis is precisely the straightening law associated to the
Hibi ring of the distributive lattice $L( \Z_n)$ \cite{Hibi1987}. 
Initial complexes of toric ideals always yield regular triangulations \cite{sturmfels1996}.

To see that $\O(\Z_n)$ satisfies the Gorenstein property with respect to $m = 3$, note
that the defining inequalities for $3 \O(\Z_n)$ are that $v_i \geq 0$ for $i$ odd,
$v_i \leq 3$ for $i$ even, $v_{2i - 1} \leq v_{2i}$ and $v_{2i+1} \leq v_{2i}$.  
The unique interior lattice point of $3 \O(\Z_n)$ is the point $v$ where $v_i = 1$
for $i$ odd, and $v_i = 2$ for $i$ even.  Finally, this point has lattice distance
$1$ from each of the facet-defining inequalities.  Hence $\O(\Z_n)$ is a  Gorenstein polytope of index three
with a regular unimodular triangulation and Theorem \ref{thm:bruns} can be applied
to deduce that the coefficient sequence is symmetric and unimodal.  
\end{proof}

While general principles provide a proof of the symmetry and unimodality of
the sequence  $s_n(0), s_n(1), \ldots, s_n(n-2)$, it would be interesting to find
explicit combinatorial arguments that would produce these results.  In particular,
we let $A_{n,k}$ denote the set of alternating permutations on $n$ letters with 
exactly $k$ swaps, then it would be interesting to solve the following problems.

\begin{problem}
\begin{enumerate}
\item Find a bijection between $A_{n,k}$ and $A_{n,n-k-2}$.  
\item For each $0 \leq k \leq \lfloor (n -4)/2\rfloor$ find an injective map from
$A_{n,k}$ to $ A_{n,k+1}$.  
\end{enumerate}
\end{problem}

\section*{Acknowledgments}

Jane Coons was partially supported by the Max-Planck Institute 
for Mathematics in the Sciences and the US National 
Science Foundation (DGE-1746939). Seth Sullivant was partially
supported by the US National Science Foundation (DMS 1615660).

\bibliographystyle{acm}
\bibliography{shelling_bib}

\end{document}